\definecolor{violet}{rgb}{0.0,0.2,0.7}
\definecolor{rouge}{cmyk}{0.0,0.6,0.4,0.3}
\definecolor{rouge2}{rgb}{0.8,0.0,0.2}
\author{Henri Guenancia}
\address{Henri Guenancia \\
Institut de Mathématiques de Jussieu \\
Université Pierre et Marie Curie \\
 Paris}
\email{guenancia@math.jussieu.fr}
\urladdr{www.math.jussieu.fr/~guenancia}
\title{Toric plurisubharmonic functions and analytic adjoint ideal sheaves}
\newcommand{\R}{\mathbb{R}}
\newcommand{\C}{\mathbb{C}}
\newcommand{\N}{\mathbb{N}}
\renewcommand{\d}{\partial}
\newcommand{\db}{\bar{\partial}}
\newcommand{\vp}{\varphi}
\renewcommand{\Im}{\mathscr I \! (\varphi)}
\newcommand{\Imp}{\mathscr I_{\!+} (\varphi)}
\newcommand{\adjz}{\mathcal{A}dj_H^0(\vp)}
\newcommand{\adjzd}{\mathcal{A}dj_D^0(\vp)}
\newcommand{\adj}{\mathcal{A}dj_H(\vp)}
\newcommand{\adjd}{\mathcal{A}dj_D(\vp)}
\newcommand{\adja}{\mathrm{Adj}(\mathfrak a^c,D)}
\newcommand{\adjo}{\mathscr A}
\renewcommand{\a}{\mathfrak a}
\newcommand{\wX}{\widetilde{X}}
\newcommand{\Ox}{\mathcal{O}_{X}}
\newcommand{\Oxt}{\mathcal{O}_{\widetilde X}}
\newcommand{\ep}{\epsilon}
\newcommand{\I}{\!\mathscr I\!}
\newcommand{\la}{\langle}
\newcommand{\ra}{\rangle}
\renewcommand{\ge}{\geqslant}
\renewcommand{\le}{\leqslant}
\newcommand{\twoheadlongrightarrow}{\relbar\joinrel\twoheadrightarrow}
\newcommand{\hg}{\hat g}
\renewcommand{\widering}[1]{\overset{\hbox{\smash{\lower1.333ex\hbox{$\displaystyle\ring{}$}}}}{\wideparen{#1}}}
\newtheorem*{cexem}{Counterexample}
\newtheorem*{thma}{Theorem A}
\newtheorem*{thmb}{Theorem B}
\newtheorem*{thmc}{Theorem C}
\begin{document}

\begin{abstract}In the first part of this paper, we study the properties of some particular plurisubharmonic functions, namely the toric ones. The main result of this part is a precise description of their multiplier ideal sheaves, which generalizes the algebraic case studied by Howald. In the second part, almost entirely independent of the first one, we generalize the notion of the adjoint ideal sheaf used in algebraic geometry to the analytic setting. This enables us to give an analogue of Howald's theorem for adjoint ideals attached to monomial ideals. Finally, using the local Ohsawa-Takegoshi-Manivel theorem, we prove the existence of the so-called generalized adjunction exact sequence, which enables us to recover a weak version of the global extension theorem of Manivel, for compact Kähler manifolds. 
\end{abstract}

\maketitle

\tableofcontents
\section*{Introduction}

\indent \indent Multiplier ideal sheaves are a fundamental tool in complex analytic geometry, for example through Nadel's vanishing theorem: attached to a plurisubharmonic (psh) function $\vp$ on a complex manifold $X$ by $\mathcal J(\vp)_x = \{f \in \mathcal O_{X, x}; ||f||_{\vp}=|f|e^{-\vp} \in L^2_{\mathrm{loc}}(\mathrm{Leb})\}$, they measure the singularity of $\vp$.\\
Lazarsfeld introduced their algebraic analogue for an ideal $\mathfrak a$ using a log-resolution of $\mathfrak a$; of course both ideals coincide whenever $\vp$ is attached to $\a$ (this means that $\vp= \frac 1 2 \log(|f_1|^2+\cdots+|f_r|^2)+O(1)$ if $\a = (f_1, \ldots, f_r)$ locally), but the conceptual gap between the analytic and algebraic definitions suggests that both approaches may be useful, and in some sense complementary to each others.\\ 

In the first part of this paper, we give a different analytic approach of Howald's theorem, that extends to plurisubharmonic functions. \\
More precisely, Howald's theorem states that the multiplier ideal $\mathcal J(\mathfrak a)$ attached to a monomial ideal $\mathfrak a = (\mathbf{z}^{\mathbf{\alpha_1}}, \ldots, \mathbf{z}^{\mathbf{\alpha_r}})\subset \C[z_1,\ldots z_r]$ is generated by monomials $\mathbf{z}^{\mathbf{\alpha}}$ satisfying $\mathbf{\alpha} + \mathds{1} \in \widering{P(\mathfrak a)}$, where $P(\mathfrak a)$ is the Newton polyhedron attached to the $\alpha_i$'s. Extending the notion of Newton polyhedron attached to a monomial ideal to any toric psh function $\vp$ \--- by a toric psh function we mean a psh function that is pointwise invariant under the compact unit torus $\mathbb{T}^n$ \--- and using integrability properties for concave functions, we prove the generalization of Howald's theorem concerning the description of $\mathscr I(\vp)$: 

\begin{thma}
Let $\vp$ be a toric psh function on some polydisk of $\C^n$ centered at $0$, and let us set $\mathds 1 = (1,\ldots, 1)$. Then $\mathscr I(\vp)$ is a monomial ideal, and we have:
\[\mathbf{z}^{\alpha}=z_1^{\alpha_1}\cdots z_n^{\alpha_n} \in \Im \quad  \Longleftrightarrow \quad \alpha + \mathds 1 \in \widering{P(\vp)}.\]
\end{thma}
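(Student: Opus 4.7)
The plan is to reduce Theorem A to a convex-analytic integrability criterion on the negative orthant, by way of three stages: (i) use torus invariance to show $\Im$ is monomial, (ii) rewrite $\int |z^\alpha|^2 e^{-2\vp}\, dV$ as an explicit integral of $e^{-v_\beta}$ for a convex function $v_\beta$, and (iii) characterise finiteness of this integral in terms of $\beta = \alpha + \mathds 1$ and the Newton polyhedron.

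For stage (i), fix a germ $f = \sum_\alpha c_\alpha z^\alpha \in \mathcal O_{X,0}$. Because $e^{-2\vp}$ and Lebesgue measure are both $\mathbb T^n$-invariant, integration in each angular variable $\theta_k$ kills the cross-terms of $|f|^2$, yielding
\[ \int |f|^2 e^{-2\vp}\, dV \;=\; \sum_\alpha |c_\alpha|^2 \!\int |z|^{2\alpha} e^{-2\vp}\, dV, \]
so $f \in \Im$ iff every monomial $z^\alpha$ with $c_\alpha \neq 0$ belongs to $\Im$. For stage (ii), write $\vp(z) = u(\log|z_1|,\ldots,\log|z_n|)$ with $u$ convex on $(-\infty,0)^n$ and non-decreasing in each variable (the standard presentation of a toric psh function). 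The substitution $z_k = e^{s_k+i\theta_k}$ together with integration in $\theta$ produces
\[ \int |z^\alpha|^2 e^{-2\vp}\, dV \;=\; (2\pi)^n\!\!\int_{(-\infty,0)^n}\! e^{-2 v_\beta(s)}\, ds, \qquad v_\beta(s) := u(s) - \langle \beta, s\rangle, \]
where $\beta = \alpha + \mathds 1$ and $v_\beta$ is convex on $(-\infty,0)^n$. The theorem is thereby reduced to showing $\int_{(-\infty,0)^n} e^{-2v_\beta}\, ds < \infty \Longleftrightarrow \beta \in \widering{P(\vp)}$.

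The easy half of stage (iii) goes as follows: if $\beta \in \widering{P(\vp)}$, one may pick $\epsilon > 0$ with $\beta - \epsilon \mathds 1 \in P(\vp)$, which by definition of the Newton polyhedron gives $\langle \beta - \epsilon \mathds 1, s\rangle \le u(s) + C$ on $(-\infty,0)^n$. Hence $v_\beta(s) \ge \epsilon \sum_k |s_k| - C$, and the integral is dominated by $e^{2C}\!\int e^{-2\epsilon \sum_k |s_k|}\, ds < \infty$. The hard half, the converse, is the main obstacle. Assuming $\beta \notin \widering{P(\vp)}$, a Hahn-Banach separation in the recession cone of $P(\vp)$ yields a direction $\xi \in -\R_{\ge 0}^n \setminus \{0\}$ along which the recession function satisfies $v_\beta^\infty(\xi) \le 0$; that is, $v_\beta$ fails to be strictly linearly coercive in the direction $\xi$. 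The delicate step is to promote this defect on a single ray to divergence of the full $n$-dimensional integral. I would thicken the ray to an open cone using convexity of $v_\beta$ combined with the monotonicity of $u$ in each coordinate (which lets one extend estimates componentwise), obtaining an open set with arbitrarily large $|s|$ on which $v_\beta$ stays bounded above, and hence $\int e^{-2v_\beta}\, ds = \infty$. This convex-analytic lemma — integrability of $e^{-v}$ on a cone is controlled by strict positivity of the recession function on the asymptotic boundary — is the main technical step, and is presumably where the "integrability properties for concave functions" advertised in the introduction enter.
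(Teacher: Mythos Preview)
Your reduction in stages (i)--(ii) and the easy half of (iii) match the paper exactly (the paper packages the torus-averaging as Parseval's theorem and appeals to a short lemma to conclude the ideal is finitely generated by monomials, but this is the same argument). The substantive difference is in how the hard implication is handled.

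The paper argues the hard direction by contradiction, and its mechanism is not your ``thicken a ray'' idea. In the concave picture ($g(x)=-u(-x)$ on $\R_+^n$), one assumes $\int_D e^{g-\langle\beta,\cdot\rangle}<\infty$ while $\beta\notin\widering{P(g)}$. Hahn--Banach produces $w\succeq 0$ with $\langle\lambda,w\rangle\ge\langle\beta,w\rangle$ for all $\lambda\in\widering{P(g)}$. Fubini then yields a single ray $R=a+\R_+w$ on which $e^{g-\langle\beta,\cdot\rangle}$ is integrable \emph{and} $g$ is differentiable almost everywhere. One-variable concavity forces the directional derivative $D g_x(w)$ to decrease to some $\ell$, and integrability along $R$ forces $\ell<\langle\beta,w\rangle$. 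Hence at some differentiable point $x\in R$ one has $\langle\nabla g(x),w\rangle<\langle\beta,w\rangle$, so $\langle\nabla g(x)+\epsilon\mathds 1,w\rangle<\langle\beta,w\rangle$ for small $\epsilon>0$; but $\nabla g(x)+\epsilon\mathds 1\in\widering{P(g)}$ (any gradient of a concave function lies in $\overline{P(g)}$), contradicting the separation. So the paper never thickens anything: it extracts a single good ray via Fubini and reads off a gradient pointing the wrong way.

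Your direct approach also works, but the justification you give for the thickening is not quite right. Coordinate monotonicity of $u$ by itself does not control $v_\beta=u-\langle\beta,\cdot\rangle$, since $\beta\succ 0$ makes the linear part pull the other way. What actually makes the tube argument go through is pure convexity: once you know $v_\beta^\infty(\xi)\le 0$, the difference quotient $t^{-1}(v_\beta(s_0+t\xi)-v_\beta(s_0))$ is nondecreasing in $t$ with limit $\le 0$, so $v_\beta(s_0+t\xi)\le v_\beta(s_0)$ for \emph{every} base point $s_0$ and all $t\ge 0$. Thus $v_\beta$ is bounded above on every ray parallel to $\xi$, and a tube $B(s_0,\delta)+\R_+\xi$ of infinite measure sits under a uniform bound --- no coordinate monotonicity needed. (Monotonicity of $u$ enters only to pin down $\xi\in -\R_{\ge 0}^n$, via $P(\vp)+\R_+^n\subset P(\vp)$, which you already used.) With that correction your argument is complete and arguably more transparent than the paper's, though the paper's Fubini-plus-gradient trick has the virtue of isolating exactly the obstruction $\nabla g(x)\in\overline{P(g)}$ that drives both proofs.
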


This shows in passing that the (generalized) openness conjecture stated in \cite{DK} holds for toric psh functions. Finally, we give one example of usual psh function for which we use this result to characterize very precisely the multiplier ideal.\\
Let us notice that J. McNeal and Y. Zeytuncu recently gave a new proof of Howald's theorem in \cite{MZ} using basic analytic techniques. We will use a more systematic approach which points out the crucial role of convexity in this matter.\\

In the second part of this paper, we focus on another ideal sheaf, related to the multiplier ideal sheaf, namely the adjoint ideal sheaf attached to a smooth hypersurface, say $H$. This ideal, well-known in complex algebraic geometry, is a subsheaf of the multiplier ideal sheaf which measures how largely the restricted ideal $\mathcal J(\mathfrak a)_{|H}$ contains $\mathcal J(\mathfrak a_{|H})$, as expressed in \cite{Laz}. 

Our goal is to define an analytic analogue $\adj$ attached to any psh function on a smooth complex manifold $X$. In view of the Ohsawa-Takegoshi-Manivel theorem, the natural candidate for $\adj$ would be defined by its stalks 
\[\adjz_x=\left\{f \in \mathcal O_{X, x}\, ; ||f||_{\vp}\in L^2_{\mathrm{loc}}(\mathrm{Poin}_H)\right\}\]
where $\mathrm{Poin}_H$ is the standard Poincaré volume form attached to $H$; namely if $H$ is locally given by $\{h=0\}$, then $\mathrm{Poin}_H=\frac{1}{|h|^2 \log^2 |h|} \mathrm{Leb}$.\\ 

Unfortunately, the ideal $\adjz$ doesn't coincide in general with the algebraic adjoint: indeed, even in the algebraic case, $\adjz$ fails to satisfy the expected openness property (in general, $\mathcal Adj_H^0((1+\ep)\vp)\neq\adjz$ for any $\ep>0$). So we have to perturb a bit this ideal by setting \[\adj= \bigcup_{\ep >0} \mathcal Adj_H^0((1+\ep)\vp).\]
Once we have introduced our ideal, we need to make sure that it coincides with the algebraic adjoint ideal whenever $\vp$ is associated to an ideal $\mathfrak a = (f_1, \ldots, f_r)$, namely $\vp= \frac 1 2 \log(|f_1|^2+ \cdots+|f_r|^2)+O(1)$ where the $f_i$'s are polynomials (or even holomorphic functions). \\
This new point of view allows us to show an analogue of Howald's theorem for (algebraic) adjoint ideals: 

\begin{thmb}
Let $\a = (\mathbf z^{\alpha_1}, \ldots, \mathbf{z}^{\alpha_k}) \subset \C[z_1, \ldots, z_n]$ be a monomial ideal, $H=\{z_1=0\}$ such that $\a \nsubseteq (z_1)$. We denote by $ri(F_1)$ the relative interior of the (infinite) face of $P(\a)$ contained in $\{x_1=0\}$ and we set $\widetilde{\mathds 1} = (0, 1, \ldots, 1)$. Then for every $c>0$, $\mathrm{Adj}(\a^c, H)$ is a monomial ideal and
\[\mathbf z^{\beta} \in \mathrm{Adj}(\a^c, H) \quad  \Longleftrightarrow \quad \beta + \widetilde{\mathds 1} \in
c \, \cdotp \widering{P(\a)} \, \cup \, c \, \cdotp \mathrm{ri}(F_1).\]
\end{thmb}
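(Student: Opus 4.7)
The plan is to mirror the proof of Theorem A, using the analytic description $\mathrm{Adj}(\mathfrak{a}^c,H)=\adj$ for $\vp=\tfrac{c}{2}\log\bigl(\sum_i|\mathbf z^{\alpha_i}|^2\bigr)$. Because both $\vp$ and the Poincaré weight attached to $H=\{z_1=0\}$ are $\mathbb{T}^n$-invariant, the torus-averaging argument of Theorem A adapts directly and shows that $\adj$ is a monomial ideal; it therefore suffices to decide, for each monomial $\mathbf z^\beta$, whether $\mathbf z^\beta\in\mathcal{A}dj_H^0((1+\ep)\vp)$ for some $\ep>0$.

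Passing to polar coordinates $z_j=e^{-x_j+i\theta_j}$ (so $x_j>L$ in a small polydisc around $0$) and integrating out the angles, this amounts to the convergence of
\[
I(\ep)=\int_{[L,\infty)^n}\frac{1}{x_1^2}\,e^{-2(\beta+\widetilde{\mathds{1}})\cdot x}\Bigl(\sum_i e^{-2\alpha_i\cdot x}\Bigr)^{-(1+\ep)c}\,dx;
\]
the shift $\widetilde{\mathds{1}}$ combines the Jacobian $\prod e^{-2x_j}\,dx_j$ with the factor $|z_1|^{-2}=e^{2x_1}$ coming from $\mathrm{Poin}_H$, while $(\log|z_1|)^{-2}$ gives the $1/x_1^2$ weight. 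Using $e^{-2h_P(x)}\le\sum_i e^{-2\alpha_i\cdot x}\le k\,e^{-2h_P(x)}$ with $h_P(x)=\min_i\alpha_i\cdot x$, the integrand is, up to multiplicative constants, $\tfrac{1}{x_1^2}e^{-2\phi_\ep(x)}$ for the piecewise-linear, $1$-homogeneous, convex function $\phi_\ep(x)=(\beta+\widetilde{\mathds{1}})\cdot x-(1+\ep)c\,h_P(x)$.

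For the direct implication I distinguish two cases. If $\beta+\widetilde{\mathds{1}}\in\widering{cP(\mathfrak{a})}$, then for $\ep$ small enough $\phi_\ep>0$ on $\mathbb{R}_+^n\setminus\{0\}$, hence $\phi_\ep(x)\ge c_0|x|$ by homogeneity, and $I(\ep)<\infty$ is immediate. If instead $\beta+\widetilde{\mathds{1}}\in c\cdot\mathrm{ri}(F_1)$, so $\beta_1=0$, then writing $x=(x_1,x')$ and using $F_1\subset P(\mathfrak{a})$ one has $h_P(x)\le h_{F_1}(x')$, whence
\[
\phi_\ep(x)\ge\sum_{j\ge 2}(\beta_j+1)\,x_j-(1+\ep)c\,h_{F_1}(x')\ge c_0|x'|
\]
for $\ep$ small, by the same Howald-type analysis applied to the lower-dimensional Newton polyhedron $F_1\subset\mathbb{R}^{n-1}$. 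The $x_1$-integration is then absorbed by $1/x_1^2$ alone: $I(\ep)\le\bigl(\int_L^{\infty}x_1^{-2}\,dx_1\bigr)\int_{x'\ge L}e^{-2c_0|x'|}\,dx'<\infty$.

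The converse is the main obstacle and relies on a careful analysis of the normal fan of $P(\mathfrak{a})$. If $p:=\beta+\widetilde{\mathds{1}}$ is not in $\widering{cP(\mathfrak{a})}\cup c\cdot\mathrm{ri}(F_1)$, then either $p\notin cP(\mathfrak{a})$ or $p$ sits on a boundary face of $cP(\mathfrak{a})$ distinct from $cF_1$. In either case one exhibits an inward normal $a_0\in\mathbb{R}_+^n\setminus\mathbb{R}_+e_1$ satisfying $p\cdot a_0\le c\,h_P(a_0)$ with $h_P(a_0)>0$ (the key observation being that the only face of $cP(\mathfrak{a})$ whose normal lies in $\mathbb{R}_+e_1$ is $cF_1$ itself, and that $p_j=\beta_j+1>0$ for $j\ge 2$ rules out the other coordinate-hyperplane faces). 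Then $\phi_\ep(a_0)\le-\ep c\,h_P(a_0)<0$ for every $\ep>0$, so the integrand grows exponentially along $\mathbb{R}_+a_0$; because $a_0$ is not proportional to $e_1$, the polynomial decay $1/x_1^2$ cannot compensate, forcing $I(\ep)=+\infty$. Conceptually this is the whole point: the Poincaré weight $1/x_1^2$ is aligned precisely with the outer normal $e_1$ of the face $F_1$—and nothing more—which is why exactly $F_1$ appears in the characterization.
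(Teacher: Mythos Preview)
Your argument is correct and follows the paper's overall strategy: use the analytic description $\mathrm{Adj}(\mathfrak a^c,H)=\adj$, pass to logarithmic coordinates, and reduce to the convergence of the weighted integral $I(\ep)$ involving the piecewise-linear function $h_P(x)=\min_i\alpha_i\cdot x$. The sufficiency direction (condition $\Rightarrow$ membership) is handled in both proofs by the same two-case split; your use of the lower-dimensional support function $h_{F_1}$ is equivalent to the paper's barycentric decomposition $A=\sum t_i\alpha_i+\lambda$.

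The genuine difference is in the necessity direction. The paper does not search directly for a divergent ray. Instead it observes that if $I(\ep)<\infty$ then, since $e^{-2\eta t_1}\,t_1^2$ is bounded, the integral $\int e^{2[(1+\ep)g-\langle A+\eta e_1,\cdot\rangle]}$ converges for every $\eta>0$; Proposition~\ref{prop:conc} then gives $A+\eta e_1\in(1+\ep)\widering{P(g)}$ for all $\eta>0$, and a short face-by-face argument shows this forces $A\in\widering{P(\mathfrak a)}\cup\mathrm{ri}(F_1)$. Your route is the contrapositive: you locate an inward normal $a_0\in\R_+^n\smallsetminus\R_+e_1$ with $p\cdot a_0\le c\,h_P(a_0)$ and $h_P(a_0)>0$, and deduce exponential blow-up along a cone around $a_0$. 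This is valid—the positivity $h_P(a_0)>0$ indeed holds, because a facet with normal $a_0$ and $h_P(a_0)=0$ would lie in $\bigcap_{(a_0)_j>0}\{x_j=0\}$, forcing $a_0$ to be a coordinate vector—but the divergence step deserves one more line (e.g.\ perturb $a_0$ into the open orthant and integrate over a cone). The paper's $\eta$-trick sidesteps this by recycling the convexity lemma already proved; your argument is more self-contained and makes the geometric role of the normal fan explicit.
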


We then prove that the fundamental adjunction exact sequence appearing during the proof of Theorem 9.5.1 in \cite{Laz} extends to our setting, under the additional hypothesis that $e^{\vp}$ is Hölder continuous: 
\begin{thmc}
Let $X$ be a complex manifold, $H\subset X$ a smooth hypersurface, and $\vp$ a psh function on $X$, $\vp_{|H} \neq -\infty$, such that $e^\vp$ is locally Hölder continuous. Then the natural restriction map induces the following exact sequence: 
\[ 0 \longrightarrow \Imp \otimes \mathcal O_X(-H) \longrightarrow \adj \longrightarrow \mathscr I_{\!+} (\vp _{|H})\longrightarrow 0\]
\end{thmc}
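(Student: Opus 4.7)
The claim is local, so the plan is to work in a polydisc $U$ around a point $x_0\in H$ with coordinates $(z_1,z')$ such that $H\cap U=\{z_1=0\}$ and $z_1$ trivializes $\mathcal O_X(-H)$; continuity of $e^\vp$ allows me to assume $\vp\le 0$ on $U$ after subtracting a constant. The inclusion $z_1\Imp\subset\adj$ is immediate, since for $g\in\Imp$ one has $\int_U|z_1g|^2e^{-2(1+\ep)\vp}/(|z_1|^2\log^2|z_1|)\,dV=\int_U|g|^2e^{-2(1+\ep)\vp}/\log^2|z_1|\,dV<\infty$ by local boundedness of $1/\log^2|z_1|$, and the image clearly vanishes on $H$. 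For the converse, given $f=z_1g\in\adj$, I would establish $g\in\Imp$ by Hölder's inequality: fix $\ep'<\ep$ and conjugate exponents $1/p+1/q=1$ with $p\in\bigl(1,(1+\ep)/(1+\ep')\bigr)$, so that $s:=q[(1+\ep')-(1+\ep)/p]<0$. The factorization
\[
|g|^2e^{-2(1+\ep')\vp}=\bigl(|g|^2e^{-2(1+\ep)\vp}/\log^2|z_1|\bigr)^{1/p}\bigl(|g|^2e^{-2s\vp}\log^{2q/p}|z_1|\bigr)^{1/q}
\]
yields via Hölder that $\int_U|g|^2e^{-2(1+\ep')\vp}\,dV<\infty$: the first factor integrates to the given finite quantity, and the second is finite because $\vp\le 0$ combined with $s<0$ forces $e^{-2s\vp}\le 1$, while $|g|^2\log^{2q/p}|z_1|$ is locally integrable.

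For surjectivity of the restriction map onto $\mathscr I_{\!+}(\vp|_H)$, I would apply the local Ohsawa--Takegoshi--Manivel extension theorem with the Poincaré weight: starting from $f\in\mathscr I_{\!+}(\vp|_H)_{x_0}$ satisfying $\int_V|f|^2e^{-2(1+\ep)\vp|_H}\,dV_H<\infty$, it produces an extension $F\in\mathcal O(U)$ of $f$ with $\int_U|F|^2e^{-2(1+\ep)\vp}/(|z_1|^2\log^2|z_1|)\,dV\le C\int_V|f|^2e^{-2(1+\ep)\vp|_H}\,dV_H<\infty$, so that $F\in\mathcal{A}dj_H^0((1+\ep)\vp)\subset\adj$.

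The remaining---and principal---difficulty is to show that the restriction of any $F\in\adj$ lies in $\mathscr I_{\!+}(\vp|_H)$, not merely in $\mathcal O_H$. This trace-type inequality is where the Hölder hypothesis on $e^\vp$ enters essentially. The plan is to combine the sub-mean-value inequality $|F(0,z')|^2\le (2\pi)^{-1}\int_0^{2\pi}|F(re^{i\theta},z')|^2\,d\theta$ (valid because $|F|^2$ is subharmonic in $z_1$) with the Hölder estimate $|e^{\vp(z_1,z')}-e^{\vp(0,z')}|\le C|z_1|^\alpha$, which forces $e^{-2(1+\ep)\vp(re^{i\theta},z')}\ge c\cdot e^{-2(1+\ep)\vp(0,z')}$ uniformly in $\theta$ whenever $r\le r_0(z'):=(e^{\vp(0,z')}/2C)^{1/\alpha}$. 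Integrating the resulting pointwise estimate against $dr/(r\log^2 r)$ (whose integral from $0$ to $r_0$ equals $1/|\log r_0|$) and applying Fubini over $z'\in H$, one gets
\[
\int_H|F(0,z')|^2\, e^{-2(1+\ep)\vp(0,z')}\frac{dV(z')}{|\log r_0(z')|}\le C\int_U|F|^2 e^{-2(1+\ep)\vp}/(|z_1|^2\log^2|z_1|)\,dV<\infty.
\]
Since $|\log r_0(z')|=O(1-\vp(0,z'))$, the elementary uniform estimate $e^{2(1+\ep)t}/(1+t)\ge c_{\ep,\ep'}\,e^{2(1+\ep')t}$ for $t\ge 0$ (with $t=-\vp(0,z')$) and any $\ep'<\ep$ converts this into $\int_H|F|_H|^2e^{-2(1+\ep')\vp|_H}\,dV_H<\infty$, as required. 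The Hölder exponent $\alpha$ controls the range of admissible $\ep'$ through the comparison between $|\log r_0(z')|$ and $-\vp(0,z')$.
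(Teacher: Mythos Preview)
Your proof is correct and follows essentially the same architecture as the paper's: localize, get the kernel identification from an elementary ``remove the $\log^2|z_1|$ denominator at the cost of shrinking $\ep$'' step, obtain surjectivity from Ohsawa--Takegoshi--Manivel with weight $(1+\ep)\vp$, and prove the trace estimate by integrating over $z_1$-disks of radius $\asymp e^{\vp(0,z')/\alpha}$ and then trading the resulting $1/|\vp|$ factor for a slight loss in $\ep$.

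The tactical choices differ in two places, and yours are arguably cleaner. For the kernel, the paper proves the auxiliary implication $\int|g|^2e^{-2(1+\ep)\vp}/\log^2|z_n|<\infty\Rightarrow g\in\Imp$ via a level-set decomposition (splitting according to whether $\vp\le\tfrac14\log|z_n|$), whereas your H\"older factorization is more direct. For the trace, the paper controls $|F(z',0)|$ by the Taylor estimate $|F(z',0)|^2\le C_1|z_n|^2+|F(z)|^2$, which generates an extra error term to bound separately; your use of the sub-mean-value inequality for $|F|^2$ in $z_1$ avoids that error term entirely. Both routes lead to the same intermediate inequality $\int_H|F|_H|^2e^{-2(1+\ep)\vp|_H}/(1-\vp|_H)\,dV_H<\infty$, after which the conversion to $\mathscr I((1+\ep')\vp|_H)$ is identical (the paper packages it as a lemma, you state the pointwise inequality $e^{2(1+\ep)t}/(1+t)\ge c\,e^{2(1+\ep')t}$). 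One minor remark: your closing sentence that ``the H\"older exponent $\alpha$ controls the range of admissible $\ep'$'' is slightly off---since $|\log r_0(z')|=\alpha^{-1}(-\vp(0,z'))+O(1)$, the exponent $\alpha$ only affects the constant, and every $\ep'\in(0,\ep)$ works regardless of $\alpha$.
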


We deduce from this result that the adjoint ideal is coherent whenever $e^{\vp}$ is locally Hölder continuous, and we then show how to recover a qualitative version of the Manivel global extension theorem on compact Kähler manifolds.

Finally, we give some properties of the sheaf $\adjz$, and explain what can be expected of it.

\section*{Acknowledgments }

I would like to express my sincere gratitude to Sébastien Boucksom whose questions allowed this work to dawn, and whose ideas have contributed in an essential way to its making; may he also be thanked for having made so many times my problems become his own. My gratefulness is also aimed at Paul Guenancia who has always granted me his indefectible trust and his encouragement.

\section{Toric plurisubharmonic functions}

\subsection{Multiplier ideal sheaves}
\label{sec:mis}
In this section, we recall the notion of multiplier ideal sheaves, introduced by Nadel, and which measures the singularity of a psh function. Their definition is rather simple: 

\begin{defi}
Let $X$ be a complex manifold, $\vp$ a psh function on $X$. The multiplier ideal sheaf attached to $\vp$, $\mathscr I (\vp)$, consists in the germs of holomorphic functions  $f \in \mathcal O_{X, x}$ such that $|f|^2e^{-2\vp}$ is integrable with respect to the Lebesgue measure in any local coordinates chart near $x$.
\end{defi}

Let's recall the following fundamental result, even if we won't use it directly (for a proof, see e.g \cite{Dem2}):

\begin{theo}[Nadel, 1989] 
For every psh function $\vp$ on $X$, the sheaf $\mathscr I (\vp)$ is a coherent ideal sheaf on $X$.
\end{theo}

Now we would like to get briefly onto the openness conjecture. We need to recall the definition of a right-regularized version of the multiplier ideal sheaf $\Im$, and introduced in \cite{DEL}: 
\begin{defi}
\phantomsection
\label{def:imp}
Let $X$ be a complex manifold, and $\vp$ a psh function on $X$. We define
\[\Imp = \bigcup_{\ep>0} \mathscr I((1+\ep)\vp)\]
\end{defi}

\begin{rema}
By the strong noetherian property for coherent sheaves, for all $\Omega \Subset X$, there exists $\ep_{\vp, \Omega}>0$ such that for all $0<\ep\leqslant \ep_{\vp, \Omega}$, we have $\Imp_{|\Omega} = \mathscr I((1+\ep)\vp)_{|\Omega} = \mathscr I((1+\ep_{\vp, \Omega})\vp)_{|\Omega}$.
\end{rema}

The famous openness conjecture expressed in \cite{DK} admits a natural generalization in terms of these right-regularized multiplier ideals: 

\begin{conj}[Strong openness conjecture]
\phantomsection
\label{conj:ouvg}
Let $\vp$ be a plurisubharmonic function on $X$, the the following equality of sheaves holds: 
\[ \Imp = \Im.\]
\end{conj}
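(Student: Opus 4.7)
The easy inclusion $\Imp \subset \Im$ is automatic. Working locally on some $\Omega \Subset X$, the psh function $\vp$ is bounded above, so after subtracting a constant we may assume $\vp \le 0$; then $(1+\ep)\vp \le \vp$ gives $e^{-2(1+\ep)\vp} \ge e^{-2\vp}$, hence $\mathscr I((1+\ep)\vp) \subset \Im$ for every $\ep>0$, and the inclusion follows by taking unions. The entire content of the conjecture is therefore the reverse inclusion: if $|f|^2 e^{-2\vp} \in L^1_{\mathrm{loc}}$ near $x$, then there must exist $\ep>0$ such that $|f|^2 e^{-2(1+\ep)\vp} \in L^1_{\mathrm{loc}}$ near $x$.

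In the toric setting the conjecture follows almost immediately from Theorem A. Indeed, for a toric psh function the Newton polyhedron satisfies $P((1+\ep)\vp) = \tfrac{1}{1+\ep}P(\vp)$, and the condition $\alpha + \mathds 1 \in \widering{P(\vp)}$ appearing in Theorem A is open: any interior point of $P(\vp)$ lies in $\tfrac{1}{1+\ep}\widering{P(\vp)}$ for $\ep$ small enough (depending on the point). Since $\Im$ is generated, as a monomial ideal, by finitely many $\mathbf z^\alpha$ (by noetherianity), a uniform $\ep$ works for all of them, and one concludes $\Im = \mathscr I((1+\ep)\vp)$ locally. This already proves Conjecture 1.5 for toric $\vp$ and suggests that the engine of openness is the openness of interiors of convex bodies.

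To attack the general case the natural strategy is to reduce to analytic (or toric) singularities via a suitable regularization. One would take Demailly's Bergman-kernel approximations $\vp_m = \tfrac{1}{2m}\log\sum_j |\sigma_{j,m}|^2$, where $(\sigma_{j,m})$ is an orthonormal basis of $\mathcal O(\Omega) \cap L^2(e^{-2m\vp})$; these $\vp_m$ have analytic singularities, converge to $\vp$ in a strong sense, and satisfy $\mathscr I(\vp) \subset \mathscr I(\vp_m)$ up to constants. For psh functions with analytic singularities, one pulls back to a log-resolution, reduces to a normal-crossing monomial situation, and applies the toric case. The real difficulty — and this is where I expect the proof to get stuck — is the quantitative passage to the limit: one needs a \emph{uniform} $\ep>0$, independent of $m$, such that $|f|^2 e^{-2(1+\ep)\vp_m}$ remains locally integrable, so that Fatou's lemma allows $m \to \infty$. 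This kind of uniform control seems to require the Ohsawa--Takegoshi--Manivel extension theorem with the sharpest possible constant, applied to extensions from the singular locus of $\vp$ (or from level sets of the complex singularity exponent). Obtaining such an optimal extension, and arranging the iteration so that the loss $\ep$ is controlled in each step, is the main obstacle; it is precisely this step that keeps the statement in Conjecture 1.5 a conjecture rather than a theorem.
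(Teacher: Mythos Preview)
The statement you are asked to prove is labelled a \emph{conjecture} in the paper and is not proved there; the paper only establishes the toric special case as a corollary of Theorem~A. You correctly identify this: you prove the easy inclusion, deduce the toric case from Theorem~A, and then honestly describe why the general case remains open. In that sense your proposal is appropriate and matches the paper's treatment.

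One small correction in your toric argument: the scaling is $P((1+\ep)\vp) = (1+\ep)\,P(\vp)$, not $\tfrac{1}{1+\ep}P(\vp)$ (this follows directly from the definition $P(g)=\{\lambda:\ g-\langle\lambda,\cdot\rangle\le O(1)\}$, and the paper states it explicitly). With the correct scaling the argument is even simpler: if $\alpha+\mathds 1$ lies in the open set $\widering{P(\vp)}$, then $\tfrac{1}{1+\ep}(\alpha+\mathds 1)$ is close to it for small $\ep$ and hence still in $\widering{P(\vp)}$, i.e.\ $\alpha+\mathds 1\in(1+\ep)\widering{P(\vp)}=\widering{P((1+\ep)\vp)}$. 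This is exactly the one-line deduction the paper gives after Theorem~\ref{thm:how2}. Your invocation of noetherianity to pass to a uniform $\ep$ is fine but not needed for the sheaf equality, which is checked stalkwise.

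Your sketch of a strategy for the general case (Bergman approximations, log resolutions, then a uniform-$\ep$ limit via a sharp Ohsawa--Takegoshi) is a reasonable outline of the landscape, and your diagnosis that the uniformity in $\ep$ is the crux is accurate; this is indeed why the paper leaves the statement as a conjecture.
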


The only non-trivial case where this conjecture is known is the $2$-dimensional one, as C. Favre and M. Jonsson proved it in their paper \cite{FJ}, using the so-called valuation tree.\\
We now seize the opportunity to discuss briefly the valuative point of view concerning multiplier ideals of psh functions. This approach has been widely developed in \cite{FJ} in the two-variable case, and in \cite{bfj} in higher dimensions. We will only evoke one important result.

We consider a psh germ with isolated singularities at $0\in \C^n$, and we want to describe $\Im$ or $\Imp$ in terms of valuations.
Let us denote by $\mathcal V_{\mathrm{m}}$ the space of monomial valuations, or equivalently Kiselman numbers $v_w$ for $w\in \R_+^n$, defined by: 
\[v_w(\vp)= \sup\left\{\gamma \ge 0, \, \vp(z) \le \gamma  \max_i \left(\frac 1{w_i}\log  |z_i|\right)+\underset{z\to 0}{O}(1)\right\}.\]
For example, $v_w(\mathbf{z}^{\alpha}):=v_w( \log |\mathbf z^{\alpha}|) = \la w, \alpha \ra = \sum w_i \alpha_i$. Note that the thinness of those valuations is: $A(v_w)=|w|= \sum w_i$. 
The following characterization of the multiplier ideal is given in \cite{bfj}: 
\[f \in \Im \quad \Longrightarrow \quad \forall v\in \mathcal V_{\mathrm{m}}\,  , \, \frac{v(\vp)}{v(f)+A(v)}<1.\]
If we consider now the quasi-monomial valuations $v \in \mathcal{V}_{\mathrm{qm}}$ (this means monomial valuations on some birational model of $(\C^n,0)$), one can also define their thinness, and get a full description of $\Imp$:
 \begin{theo}[\cite{bfj}]
\phantomsection
\label{thm:bfj}
Let $\vp$ be a psh germ at $0\in \C^n$. Then 
\[f \in \Imp \quad \Longleftrightarrow \quad \sup_{v\in \mathcal V_{\mathrm{qm}}} \frac{v(\vp)}{v(f)+A(v)}<1.\]
\end{theo}

\subsection{Integrability of the exponential of a concave function}
In the next section, we are going to focus on a very particular type of functions, the toric psh functions. The results we will state about them involve convergence properties for integrals of the form $\int_D e^g$ where $D= \R_+^n$ is the first orthant, and $g$ is any concave function on $D$. So this part is devoted to the study of such integrals.\\
The key-object appears in the following definition: 

\begin{defi}
Let $g$ be a concave function on $D=\{(x_1, \ldots, x_n)\in \R^n, \forall i, x_i \ge 0\}$. The Newton convex body $P(g)$ is:
\[P(g)=\left\{ \lambda \in \R^n; \, g-\la \lambda, \cdotp \ra \le O(1)  \right\}.\]
\end{defi}

\begin{rema}
The set $P(g)$ is the domain of the Legendre transform $g^*(y)=\sup_{x} (g(x)-\la y,x\ra)$.
\end{rema}

It is clear that for any real number $c>0$, $P(cg)=c\cdot P(g)$. Moreover, it is important to notice that $P(g)$ is a convex set, which is in general neither open nor closed (take $g(x)=\frac{-1}{x+1}$ and $g(x)=\log(x+1)$ respectively).\\

Before going into the important results of this section, let us fix some convenient notations:\\
\begin{enumerate}
\item We define a partial ordering on $\R^n$ by \[(x_1,\ldots,x_n)\preceq (y_1,\ldots,y_n) \Longleftrightarrow \forall i \in \{1,\ldots,n\}, \, \,  x_i \le y_i.\] In the same way we define
\[(x_1,\ldots,x_n)\prec (y_1,\ldots,y_n) \Longleftrightarrow \forall i \in \{1,\ldots,n\}, \, \,  x_i < y_i.\]
\item We set $D=\R_+^n$ and $\mathds 1:=(1, \ldots, 1) \in \R^n$.\\
\item We know that the set $E$ of points $v\in D$ such that $g$ is differentiable at $v$ has full measure in $D$ (see \cite{Rob} e.g.). \\
\item We set $Gr(g):=\mathrm{Conv}\left(\left\{\nabla g(v)+\mu; \, (v,\mu) \in E\times (\R_+^*)^n\right\}\right)$.\\
\end{enumerate}

Now take some $\lambda= \nabla g(v)+\mu$ with $(v,\mu) \in E\times (\R_+^*)^n$. Then for every $x\in D$, we have $g(x)-g(v)\le \la \nabla g(v),x-v\ra$ so that $\lambda \in \widering{P(g)}$. By convexity of $\widering{P(g)}$, we thus have $Gr(g)\subset \widering{P(g)}$.
\noindent
The crucial result of this section is given in the next proposition: 

\begin{prop}
\phantomsection
\label{prop:conc}
Let $g$ be a concave function on $D$. Then: 
\[\int_D e^g <+\infty \quad \Longleftrightarrow \quad 0\in \widering{P(g)}.\]
\end{prop}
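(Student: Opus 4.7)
Both directions rest on the upward-closure of $P(g)$ in the cone $D$: if $\lambda \in P(g)$ and $\mu \in D$, then $\la \mu, x\ra \geq 0$ on $D$, so $g(x) - \la \lambda + \mu, x\ra \leq g(x) - \la \lambda, x\ra \leq O(1)$, whence $\lambda + \mu \in P(g)$. In particular, $0 \in \widering{P(g)}$ is equivalent to the existence of some $\lambda \prec 0$ lying in $P(g)$.

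This observation makes the implication $(\Leftarrow)$ immediate: pick $\lambda \prec 0$ in $P(g)$; the bound $g(x) \leq \la \lambda, x\ra + C$ gives
\[\int_D e^g \leq e^C \prod_{i=1}^n \int_0^\infty e^{\lambda_i x_i}\,dx_i = e^C \prod_i \frac{1}{-\lambda_i} < +\infty.\]

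For $(\Rightarrow)$, I would argue by contraposition and exploit the inclusion $Gr(g) \subset \widering{P(g)}$ established just before the proposition. Assuming $0 \notin \widering{P(g)}$, we thus have $0 \notin Gr(g)$. Since $Gr(g)$ is a nonempty convex subset of $\R^n$, the Hahn-Banach theorem furnishes some $w \in \R^n \setminus \{0\}$ with $\la w, z\ra \geq 0$ for every $z \in Gr(g)$. Specializing to $z = \nabla g(v) + \mu$ with $(v,\mu)\in E\times(\R_+^*)^n$ and letting successively each $\mu_i \to +\infty$ forces $w_i \geq 0$, hence $w \in D$; letting $\mu \to 0$ then yields $\la w, \nabla g(v)\ra \geq 0$ for every $v \in E$.

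This last inequality says that the directional derivative of $g$ in direction $w$ is non-negative almost everywhere, and concavity plus integration along lines upgrades this to the monotonicity $g(x+sw) \geq g(x)$ for every $x \in D$ and every $s \geq 0$ (the half-line stays inside $D$ since $w \succeq 0$). To conclude $\int_D e^g = +\infty$, I would foliate $D$ along the direction $w$ and apply Fubini: for almost every base point $x_0$ lying in the interior of $D$ (where $g(x_0)$ is finite), the inner integral $\int_0^\infty e^{g(x_0+sw)}\,ds$ is bounded below by $e^{g(x_0)} \cdot \int_0^\infty ds = +\infty$, and integrating this over the positive-measure transverse family produces divergence. The main technical step I expect to require care is precisely this foliation when $w$ lies on $\partial D$ (some $w_i = 0$): one should single out an index $i$ with $w_i > 0$ to parametrize the ray coordinate, and check that the resulting transverse fibers, though constrained to stay in $\R_+$ in the directions where $w_j = 0$, still carry positive $(n-1)$-dimensional measure.
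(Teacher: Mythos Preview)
Your argument is correct. The $(\Leftarrow)$ direction is identical to the paper's, and for $(\Rightarrow)$ you use the same two ingredients---a Hahn--Banach separation and an analysis of $g$ along rays in the separating direction $w$---but run them in the contrapositive order.

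The paper argues by contradiction: assuming both $\int_D e^g < +\infty$ and $0 \notin \widering{P(g)}$, it separates $0$ from $\widering{P(g)}$ by some $w \succeq 0$, then invokes Fubini to locate a \emph{single} ray $R = a + \R_+ w$ on which simultaneously $e^g$ is integrable and $g$ is differentiable a.e.; convergence along $R$ forces the one--variable slope $D_w g$ to become strictly negative somewhere on $R$, which contradicts $\la\,\cdot\,,w\ra \ge 0$ on $Gr(g)\subset\widering{P(g)}$. You instead separate $0$ directly from $Gr(g)$, read off $\la \nabla g(v), w\ra \ge 0$ at every differentiable point, upgrade this (via concavity and Fubini on the transverse slices) to the monotonicity $g(x+sw)\ge g(x)$, and conclude that \emph{every} ray carries infinite mass. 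Your route isolates a clean intermediate statement (monotonicity in direction $w$) at the cost of the foliation bookkeeping you already flagged; the paper's route needs only one well-chosen ray but must arrange two full-measure conditions on it at once. The two proofs are essentially contrapositives of each other.
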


\begin{proof}[Proof]
The direction  $\Leftarrow$ is easy: there exists $\epsilon >0$ and some constant $C>0$ such that for all $x, \,g(x) +\la \epsilon \mathds 1, x \ra \le C$. Therefore we have:
\begin{eqnarray*}
 \int_{D} e^{g} & \le & C' \int_{D} e^{-\epsilon\sum_i x_i}dx_1\cdots dx_n \\
 &= & C' \prod_{i=1}^n \int_{D} e^{-\epsilon x_i}dx_i \\
 & <& +\infty.
\end{eqnarray*}
As for the other direction, we suppose that the integral $\int_D e^g$ converges. If $0\notin \widering{P(g)}$, by Hahn-Banach's theorem we can find some vector $w\in \R^n$ such that for all $u\in \widering{P(g)}$, we have $ \la u,w \ra \ge 0$ (this implies that $w$ has positive coordinates since $P(g)$ contains a translated of $\R_+^n$). By Fubini's theorem we may find $a\in D$ such that $g$ is differentiable at almost every point of the ray $R=a+\R_+w$ and $\int_R e^g<+\infty.$
As $g$ is a $1$-variable concave function on $R$, it is easy to see that $Dg_x(w)$ (Gâteaux-derivative along $w$ at $x$) decreases to some $\ell \in \R \cup \{-\infty\}$ when $x\in R$ tends to infinity. Then the integrability of $e^g$ along $R$ shows that $\ell<0$, so that there exists some $x\in R$, at which $g$ is differentiable, and which satisfies $\la \nabla g(x), w \ra = Dg_x(w)<0$. Thus we can find $\ep>0$ with $\la \nabla g(x)+\ep \mathds 1, w \ra <0$; this is absurd because $\nabla g(x)+\ep \mathds 1 \in Gr(g) \subset \widering{P(g)}$ and the linear form $\la \cdotp ,w\ra$ is non-negative on $P(g)$.
\end{proof}

Rewriting the proof using the open convex set $Gr(g)$ instead of $\widering{P(g)}$, we see that the convergence of $\int_D e^g$ implies that $0\in Gr(g)$. As $P(g+\la \lambda, \cdotp \ra) = \lambda +P(g)$ and $Gr(g+\la \lambda, \cdotp \ra) = \lambda +Gr(g)$ for all $\lambda \in \R^n$, we see that $\widering {P(g)} \subset Gr(g)$. So we have proved:

\begin{prop}
\phantomsection
\label{prop:comp}
For any concave function $g$ on $D$, we have: 
\[Gr(g) = \widering{P(g)}.\]
\end{prop}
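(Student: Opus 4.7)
The inclusion $Gr(g)\subset\widering{P(g)}$ has already been observed in the discussion preceding Proposition \ref{prop:conc}, so the entire task is to prove the reverse inclusion $\widering{P(g)}\subset Gr(g)$. My plan is to reduce this to an integrability statement and then exploit the translation–compatibility of both constructions.

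The first step is to upgrade the proof of Proposition \ref{prop:conc} into the following statement: if $\int_D e^g<+\infty$, then $0\in Gr(g)$. I would re-examine the ``$\Rightarrow$'' direction of that proof and observe that the contradiction at the end is actually obtained against the weaker set $Gr(g)$, not against $\widering{P(g)}$: we ended up with a point of the form $\nabla g(x)+\epsilon\mathds 1$, which by definition already belongs to $Gr(g)$. So suppose $0\notin Gr(g)$. Note that $Gr(g)$ is convex and, for any $v\in E$, contains the open translated orthant $\nabla g(v)+(\R_+^*)^n$; in particular it has non-empty interior. Applying Hahn–Banach separates $0$ from $Gr(g)$ by a vector $w$ satisfying $\la u,w\ra\ge 0$ for all $u\in Gr(g)$, and the presence of the translated orthant forces $w$ to have strictly positive coordinates. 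From here the ray argument of Proposition \ref{prop:conc} goes through verbatim and produces $x\in E$ and $\ep>0$ with $\la \nabla g(x)+\ep\mathds 1,w\ra<0$, contradicting $\nabla g(x)+\ep\mathds 1\in Gr(g)$.

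Once this strengthened integrability criterion is in place, the passage from $\widering{P(g)}$ to $Gr(g)$ is a straightforward translation trick. Pick $\lambda\in\widering{P(g)}$ and set $h=g-\la\lambda,\cdotp\ra$. A direct computation from the definitions gives $P(h)=P(g)-\lambda$ and $Gr(h)=Gr(g)-\lambda$, so $0\in\widering{P(h)}$. Proposition \ref{prop:conc} then yields $\int_D e^h<+\infty$, and the strengthened version above gives $0\in Gr(h)$, i.e. $\lambda\in Gr(g)$, as desired.

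The only delicate point is the rewriting of the proof of Proposition \ref{prop:conc} in step one; concretely, one has to make sure that $Gr(g)$ is convex (immediate by definition, since we took the convex hull) and has large enough interior that Hahn–Banach forces the separating vector $w$ into the positive orthant. Both properties follow painlessly from the fact that $Gr(g)$ contains a translated copy of $(\R_+^*)^n$, so no new analytic ingredient beyond what was used in Proposition \ref{prop:conc} is needed.
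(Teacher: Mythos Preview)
Your proposal is correct and follows essentially the same route as the paper: first re-run the contradiction argument of Proposition~\ref{prop:conc} with $Gr(g)$ in place of $\widering{P(g)}$ to get that $\int_D e^g<+\infty$ implies $0\in Gr(g)$, and then use the translation identities $P(g-\la\lambda,\cdotp\ra)=P(g)-\lambda$ and $Gr(g-\la\lambda,\cdotp\ra)=Gr(g)-\lambda$ to transport an arbitrary $\lambda\in\widering{P(g)}$ to $0$. One very minor remark: the Hahn--Banach step only forces the separating vector $w$ to have \emph{non-negative} coordinates (not strictly positive), but this is all that is needed for the ray $a+\R_+w$ to stay in $D$.
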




To finish this section, let us stress that what we proved is an openness property; namely if $e^g \in L^1(D)$, then $e^{(1+\ep)g} \in L^1(D)$ for $\ep$ small enough. As any locally uniformly upper bounded sequence of psh functions converging pointwise to a psh function converges in fact in \textit{the} topology of psh functions, the argument given in section 5.4 of \cite{DK} applies here to show that any small perturbation $g+h$, where $h$ is any sufficiently small concave function, satisfies the integrability condition $e^{g+h}\in L^1(D)$.

\subsection{Toric plurisubharmonic functions}

Now we get back to toric plurisubharmonic functions on a polydisk $D(0,r)=\{(z_1,\ldots,z_n) \in \C^n \, | \, \forall i \in \{1,\ldots,n\}  , |z_i|<r\}$. This is a special kind of polydisk (we fix the same radius $r$ for every coordinate), but since all the upcoming results are purely local, there will not be any loss of generality (we could even fix $r=1$). The dimension, $n$, is fixed for the rest of the paper.\\
Let us recall that a toric function $\vp$ on $D(0,r)$ is a function which is invariant under the torus action on $\C^n$: $(e^{i\theta_1}, \ldots, e^{i\theta_n})\, \cdotp z:= (e^{i\theta_1}z_1, \ldots, e^{i\theta_n}z_n)$. In more elementary terms, $\vp(z)$ depends only on $|z_1|, \ldots, |z_n|$. In the psh case, we can say more (e.g. \cite{Dem1}): 
 \begin{prop}
Let $\vp$ be a toric psh function on $D(0,r)$. Then there exists a convex function $f$, non-decreasing in each variable, defined on $]-\infty,\log r[^n$ such that for all $z=(z_1,\ldots,z_n) \in D(0,r)$, we have $\vp(z)=f(\log |z_1|,\ldots,\log |z_n|)$.\\
\end{prop}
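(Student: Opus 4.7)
The plan is to set $f(t_1, \ldots, t_n) := \vp(e^{t_1}, \ldots, e^{t_n})$ on $]-\infty, \log r[^n$ and verify the three required properties. Well-definedness and the identity $\vp(z) = f(\log |z_1|, \ldots, \log |z_n|)$ on all of $D(0,r)$ are immediate from the toric invariance of $\vp$ (on the coordinate axes one uses the convention $e^{-\infty} = 0$ and the upper semi-continuity of $\vp$).

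For the convexity of $f$, I would lift to complex exponents: consider the holomorphic tube map $\Phi(w) = (e^{w_1}, \ldots, e^{w_n})$ on $T = \{w \in \C^n : \mathrm{Re}(w_k) < \log r \text{ for all } k\}$ and set $F := \vp \circ \Phi$. As the composition of a psh function with a holomorphic map, $F$ is psh on $T$, and the toric invariance of $\vp$ translates into the invariance of $F$ under purely imaginary translations, so $F(w) = f(\mathrm{Re}(w))$. Testing on an arbitrary affine complex line $\zeta \mapsto x + \zeta(y-x)$ with $x, y \in ]-\infty, \log r[^n$ real yields a subharmonic function of $\zeta \in \C$ depending only on $\mathrm{Re}(\zeta)$; such a function is necessarily convex in $\mathrm{Re}(\zeta)$ (its distributional Laplacian is a positive measure, and it coincides with $k''(\mathrm{Re}(\zeta))$ if $F$ is written as $k(\mathrm{Re}(\zeta))$ along the line). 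Specializing at $\mathrm{Re}(\zeta) = 0$ and $\mathrm{Re}(\zeta) = 1$ gives the two-point convexity inequality for $f$.

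For the monotonicity, I would fix an index $i$ and values $t_j^0 \in ]-\infty, \log r[$ for $j \neq i$, and consider the one-variable slice $\psi(z) := \vp(e^{t_1^0}, \ldots, z, \ldots, e^{t_n^0})$ for $z$ in the $i$-th slot. Restriction of a psh function to a complex line is by definition subharmonic, and toric invariance makes $\psi$ radially symmetric on the disk $D(0,r)$; hence $g(s) := \psi(e^s) = f(t_1^0, \ldots, s, \ldots, t_n^0)$ is a convex function of $s \in ]-\infty, \log r[$ by the argument of the previous paragraph applied in dimension one. The sub-mean-value inequality $g(-\infty) = \psi(0) \le \frac{1}{2\pi}\int_0^{2\pi} \psi(\rho e^{i\theta})\,d\theta = g(\log \rho)$ then combines with convexity to force $g$ non-decreasing: a convex function on $(-\infty, b)$ with strictly negative right derivative at some point would necessarily blow up to $+\infty$ as $s \to -\infty$, contradicting the bound $g(-\infty) \leq g(\log \rho)$.

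There is no serious obstacle here; the only technical point warranting mention is that $F = \vp \circ \Phi$ should be genuinely psh, i.e. not identically $-\infty$, which follows from $\vp$ being $L^1_{\mathrm{loc}}$ (hence with pluripolar $-\infty$-locus) and $\Phi$ being a holomorphic submersion onto the dense open subset of $D(0,r)$ where all coordinates are nonzero.
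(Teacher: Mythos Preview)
The paper does not prove this proposition; it states it as a well-known fact and cites Demailly's notes (\cite{Dem1}). Your argument is correct and is essentially the standard one found there: pull back by the exponential map to get a psh function on a tube domain that depends only on the real parts, deduce convexity of $f$ from subharmonicity along complex lines, and obtain monotonicity in each variable from the one-variable fact that a radial subharmonic function on a disk has non-decreasing circular means. One small remark on phrasing: in the monotonicity step, the cleanest way to reach the contradiction is simply that $\psi$, being subharmonic, is locally bounded above near $0$, so $g(s)$ stays bounded above as $s\to -\infty$; this immediately rules out $g(s)\to +\infty$ without needing to identify the limit $g(-\infty)$ with $\psi(0)$.
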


For the convenience of the reader, we now state and give an elementary proof of the following well-known result, that will be useful in the following. 
\begin{lemm}
\phantomsection
\label{lem:mon} 
If $\I$ is an ideal of the ring $\mathcal O_{\C^n,0} $ of the germs of holomorphic functions at $0\in \C^n$ such that for every $f\in \I$, all monomials appearing in $f$ are also in $\I$, then $\I$ is generated by monomials (ie it is a monomial ideal).
\end{lemm}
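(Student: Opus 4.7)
The plan is to reduce to finitely many monomial generators and then show that an arbitrary $f \in \I$ can be written as a finite $\mathcal O_{\C^n,0}$-linear combination of these by regrouping its Taylor expansion.

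Let $M$ denote the set of monomials lying in $\I$. By Dickson's lemma, the set of exponents $\{\alpha \in \N^n \, : \, z^\alpha \in M\}$ has only finitely many minimal elements for the coordinate-wise ordering $\preceq$; call these minimal exponents $\alpha_1, \ldots, \alpha_r$. Every monomial in $M$ is then divisible by some $z^{\alpha_i}$, so the ideal $\J := (z^{\alpha_1}, \ldots, z^{\alpha_r}) \subset \mathcal O_{\C^n,0}$ is contained in $\I$ and contains $M$.

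Now fix $f \in \I$ and expand $f(z) = \sum_{\alpha \in \N^n} c_\alpha z^\alpha$, a series converging normally on some polydisk $D(0,\rho)$. By hypothesis, each monomial $z^\alpha$ with $c_\alpha \neq 0$ belongs to $M$, hence is divisible by at least one $z^{\alpha_i}$. I would partition $\N^n$ into disjoint subsets $S_1, \ldots, S_r$ by declaring $\alpha \in S_i$ if $i$ is the smallest index such that $z^{\alpha_i} \mid z^\alpha$; each $\alpha$ appearing effectively in $f$ lies in exactly one $S_i$. Setting
\[
h_i(z) = \sum_{\alpha \in S_i} c_\alpha \, z^{\alpha - \alpha_i},
\]
absolute convergence of the Taylor series of $f$ on $D(0,\rho)$ forces each $h_i$ to be a holomorphic function on $D(0,\rho)$, and by construction $f = \sum_{i=1}^r h_i \, z^{\alpha_i}$, so $f \in \J$.

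I do not see any real obstacle here: the only genuinely non-formal point is justifying that the subseries defining $h_i$ converges and defines a holomorphic germ, which is immediate from the normal convergence of the Taylor expansion of $f$. One could bypass Dickson's lemma by invoking noetherianity of $\mathcal O_{\C^n,0}$ applied to the ideal $(M)$, but the Dickson approach has the advantage of being entirely elementary and keeps the argument purely combinatorial on the level of monomials.
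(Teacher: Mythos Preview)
Your proof is correct and follows essentially the same route as the paper: both rely on Dickson's lemma (which the paper proves from scratch rather than naming) to isolate finitely many minimal monomials, and then write an arbitrary element of $\I$ in terms of these. The only organizational difference is that the paper first invokes noetherianity of $\mathcal O_{\C^n,0}$ to pick finite generators $f_1,\ldots,f_p$ of $\I$ before applying Dickson to each $f_k$, whereas you apply Dickson directly to all monomials in $\I$; your version is slightly leaner and, unlike the paper, makes the convergence of the regrouped subseries $h_i$ explicit.
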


\begin{proof}[Proof]
The first step is to see that, given a (countable) set $I$ of monomials in $n$ variables, we can always extract some finite subset $J$ such that each element of $I$ can be divided by an element of $J$. \\
To see this, we use a \textit{reductio ad absurdum}. So, if this property fails, there exists a sequence $(u_k)_{k\geqslant 1}$ with values in $\mathbb{N}^n$ such that $z^{u_{k+1}}$ cannot be divided by any $z^{u_{p}}$ with $p\leqslant k$. Stated with quantifiers, the property becomes:
\[\exists \, \sigma: \N^2 \to \{1,\ldots, n\} ; \quad  \forall k\geqslant 2, \forall j<k, \, \, (u_{k})_{\sigma(j,k)} < (u_j)_{\sigma(j,k)},\]
where $(u_k)_i$ denotes the $i$-th component of the vector $u_k$.\\
As the sequence $\sigma(k-1,k)$ has values in a finite set, we can extract some subsequence, given by $\psi: \N^* \to \N^*$ increasing, such that $\sigma(\psi(k)-1,\psi(k))$ is a constant, say $1$. 
But then, for every $k\geqslant 2$, we have:  $ (u_{\psi(k)})_{1} < (u_{\psi(k)-1})_{1}$, which is impossible because $(u_k)_1$ is always a non-negative integer. \\
The second step is the result of the lemma itself.\\
As $\mathcal O_{\C^n, 0}$ is noetherian, $\I$ is finitely generated, so we can consider a finite generating family $(f_1, \ldots, f_p)$. For each index $k$, we consider the monomial ideal $\mathcal I_k$ of $\C[z_1, \ldots, z_n]$ generated by the monomials appearing in $f_k$. From the first point, there exists a finite number of minimal monomials appearing in $f_k$, such that the others ones can be divided by the minimal ones. Therefore we have shown that each $f_k$ lies in the ideal of $\mathcal O_{\C^n, 0}$ generated by a finite number of monomials appearing in the expansion of $f_k$. If we put all those minimal monomials for $f_1, \ldots, f_k$ together, we see that $\I$ is generated by (a finite number of) monomials.\\
\end{proof}

For the following, if $\vp$ is a toric psh function on $D(0,r)$ attached to the convex function $f$, we denote by $g$ the concave function defined on $[\log(r),+\infty[^n$ by $g(x)=-f(-x)$. Moreover, if $g$ is attached to $\vp$, we define $P(\vp)$ to be the Newton convex body $P(g)$ of $g$.\\

Now we can state the precise description of the multiplier ideal sheaf attached to any toric psh function, viewed as an ideal $\Im \subset \Ox(X)$ for the Stein manifold $X=D(0,r)$. This can be seen as the analogue or generalization in the analytic setting of Howald's theorem (see \cite{Laz}): 

\begin{theo}
\phantomsection
\label{thm:how2}
Let $\vp$ be a toric psh function on $D(0,r)\subset \C^n$. Then the multiplier ideal $\mathscr{I}(\vp)$ is a monomial ideal, and we have:
\[\mathbf{z}^{\alpha}=z_1^{\alpha_1}\cdots z_n^{\alpha_n} \in \Im \quad  \Longleftrightarrow \quad \alpha + \mathds 1 \in \widering{P(\vp)}.\]
\end{theo}

We want to apply this theorem to the psh function attached to $g=\min_{i} \la \alpha_i, \cdotp \ra$ for some $\alpha_i \in \R^n$. But thanks to Proposition \ref{prop:comp}, $P(\a)$ and $P(g)$ have same interiors, so we obtain: 

\begin{coro}[Howald's theorem, \cite{How}]
Let $\a=(\mathbf{z}^{\alpha_1},\ldots,\mathbf{z}^{\alpha_k})$ be a monomial ideal of $\C[z_1, \ldots, z_n]$, and let $P(\a)$ be the Newton polyhedron attached to the set $\{\alpha_1, \ldots, \alpha_k\}$. Then for every $c>0$:
\[\mathbf z^{\beta} \in \mathcal J(\a^c) \quad \Longleftrightarrow \quad \beta + \mathds 1 \in c \, \widering{P(\a)}.\]

\end{coro}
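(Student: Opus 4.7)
The plan is to deduce this corollary from Theorem \ref{thm:how2} applied to a carefully chosen toric psh function associated with $\mathfrak a^c$, reducing the statement to a purely convex-geometric identification of two Newton bodies.

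First, I would set $\vp = \frac{c}{2}\log(|\mathbf z^{\alpha_1}|^2+\cdots+|\mathbf z^{\alpha_k}|^2)$. This is a toric psh function on a small polydisk (defined up to $O(1)$ for our purposes, which does not affect multiplier ideals), and it is standard that $\mathscr I(\vp) = \mathcal J(\mathfrak a^c)$; this is the compatibility between the analytic and algebraic definitions recalled in the introduction for $\vp$ attached to $\mathfrak a$. The convex function associated to $\vp$ is $f(x) = c\cdot \max_i\langle \alpha_i, x\rangle$ (with $x_i = \log|z_i|$), hence the concave function appearing in the definition of $P(\vp)$ is $g(y) = -f(-y) = c\cdot \min_i\langle\alpha_i,y\rangle$ on $D = \R_+^n$.

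Next I would invoke Theorem \ref{thm:how2}, which tells us that $\mathscr I(\vp)$ is monomial and that $\mathbf z^\beta \in \mathscr I(\vp)$ if and only if $\beta + \mathds 1 \in \widering{P(\vp)} = c\cdot \widering{P(g_1)}$, where $g_1(y) = \min_i \langle \alpha_i, y\rangle$ (using the scaling $P(cg_1) = c\cdot P(g_1)$ already noted in the text). So the entire corollary boils down to proving the convex-geometric identity
\[
\widering{P(g_1)} \;=\; \widering{P(\mathfrak a)},
\]
where $P(\mathfrak a) = \mathrm{Conv}\bigl(\bigcup_i (\alpha_i + \R_+^n)\bigr)$ is the Newton polyhedron.

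The inclusion $P(\mathfrak a)\subset P(g_1)$ is immediate: if $\lambda = \sum t_i(\alpha_i + v_i)$ with $t_i\ge 0$, $\sum t_i = 1$, $v_i \in \R_+^n$, then for any $y\in D$,
\[
\langle \lambda, y\rangle \;=\; \sum_i t_i \langle \alpha_i, y\rangle + \sum_i t_i \langle v_i, y\rangle \;\ge\; \min_i \langle \alpha_i, y\rangle = g_1(y),
\]
so $g_1 - \langle \lambda,\cdot\rangle \le 0$ and $\lambda \in P(g_1)$. For the reverse inclusion on interiors, the cleanest route is via Proposition \ref{prop:comp}: given $\lambda \in \widering{P(g_1)} = Gr(g_1)$, write $\lambda = \nabla g_1(v) + \mu$ with $\mu \in (\R_+^*)^n$ and $v$ a point where $g_1$ is differentiable (hence $\nabla g_1(v) = \alpha_{i_0}$ for some index $i_0$ realizing the minimum). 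Then $\lambda = \alpha_{i_0} + \mu \in \alpha_{i_0} + (\R_+^*)^n \subset \widering{P(\mathfrak a)}$, proving that $Gr(g_1) \subset \widering{P(\mathfrak a)}$ and therefore $\widering{P(g_1)} \subset \widering{P(\mathfrak a)}$ by Proposition \ref{prop:comp}.

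The main obstacle I anticipate is ensuring that the differentiability argument above truly covers a dense subset of $Gr(g_1)$ — i.e., that at generic points of $D$, $g_1$ is differentiable with gradient equal to one of the $\alpha_i$'s, and that taking convex combinations of such gradients plus positive vectors fills in the full open Newton polyhedron. This is handled by the piecewise-linear nature of $g_1$: its non-differentiability locus is a finite union of hyperplanes, and its gradient on each top-dimensional cell is one of the $\alpha_i$. The required surjectivity onto $\widering{P(\mathfrak a)}$ then follows from convexity of $Gr(g_1)$ (which was the content of Proposition \ref{prop:comp}) together with the fact that every $\alpha_i$ actually arises as a gradient of $g_1$ on some open set of $D$ (which may require pruning redundant generators from the defining set of $\mathfrak a$, but does not affect $\widering{P(\mathfrak a)}$).
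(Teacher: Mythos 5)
Your proof is correct and follows essentially the same route as the paper: apply Theorem \ref{thm:how2} to the toric psh function attached to the concave function $g = c\min_i\langle\alpha_i,\cdot\rangle$, and invoke Proposition \ref{prop:comp} to identify $\widering{P(g)}$ with $c\cdot\widering{P(\mathfrak a)}$. The paper leaves that last identification as an immediate consequence of Proposition \ref{prop:comp} without further comment; you fill in the convex-geometric details (both inclusions, and the gradient description of $Gr(g_1)$), which is exactly what a complete version of the paper's argument looks like.
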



\begin{proof}[Proof of theorem \ref{thm:how2}]
We first have to check that $\mathscr I (\vp)$ is monomial, so we consider $f=\sum a_{I}z^{I} \in \Im$. This means that for some $r>0$, 
\[\int_{D(0,r)} |f|^2e^{-2\vp(|z_1|,\ldots,|z_n|)}dV(z)\]
is finite. Thanks to Parseval's theorem, this is equivalent to 
 \[ \sum_{I} |a_I|^2 \int_{D(0,r)} |z^I|^2 e^{-2\vp(|z_1|,\ldots,|z_n|)}dV(z) < +\infty\]
 so that each monomial of $f$ already belongs to $\Im$. Then we are done applying lemma \ref{lem:mon}.\\
We are now interested in the convergence of the integral \[\int_{D(0,r)} |z_1|^{2\alpha_1}\cdots|z_n|^{2\alpha_n}e^{-2\vp(z_1,\ldots,z_n)}dV(z).\] 
So we first perform the change of variables variables $z_j=r_je^{i\theta_j}$, and up to a multiplicative factor, the integral equals: \[  \int_{[0,r]^n}r_1^{2\alpha_1+1}\cdots r_n^{2\alpha_n+1}e^{-2f(\log r_1,\ldots,\log r_n)} dr_1\cdots dr_n\]
We set then $t_i=-\log(r_i)$ so that the previous integral becomes \[  \int_{[\log(r),+\infty[^n}e^{-(2\alpha_1+2)u_1}\cdots e^{-(2\alpha_n+2)u_n}e^{2g(u_1,\ldots,u_n)} du_1\cdots du_n\]
or also \[  \int_{[\log(r),+\infty[^n} e^{2g(x)-2 \langle A,x\rangle}dx .\]
Now we just have to apply Proposition \ref{prop:conc} to the concave function $2(g-\la A,\cdot \ra)$, and we are done. 
\end{proof}

As $P((1+\ep)\vp)=(1+\ep) \cdotp P(\vp)$, the characterization of the multiplier ideal given in theorem \ref{thm:how2} implies a (very) particular case of the generalized openness conjecture, recalled in this paper as the conjecture \ref{conj:ouvg}: 

\begin{coro}
The generalized openness conjecture $\mathscr I_{\!+}(\vp)=\mathscr I(\vp)$ holds for any toric psh function $\vp$.
\end{coro}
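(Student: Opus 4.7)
The plan is to leverage Theorem \ref{thm:how2} to reduce the statement to a purely geometric openness fact about the Newton convex body $P(\vp)$. First, the inclusion $\Imp \subset \Im$ is free by monotonicity of the multiplier ideal: since $\vp$ is locally bounded above (as a psh function), we have $(1+\ep)\vp \le \vp + C$ near the singular locus, hence $\mathscr I((1+\ep)\vp) \subset \Im$ for every $\ep>0$. The whole point is the converse inclusion.

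For the converse, I would argue locally (sheaves-wise) at a point $x$ near the support of the singularities. By Theorem \ref{thm:how2}, $\Im$ is a monomial ideal, and since $\Oxt_{,x}$ is noetherian, we can pick a \emph{finite} set of monomial generators $\mathbf z^{\alpha_1}, \ldots, \mathbf z^{\alpha_k}$ of $\Im_x$. Applying again Theorem \ref{thm:how2}, each $\alpha_i$ satisfies $\alpha_i + \mathds 1 \in \widering{P(\vp)}$. Since $\widering{P(\vp)}$ is \emph{open}, for each $i$ we can find $\ep_i>0$ with
\[
\frac{\alpha_i + \mathds 1}{1+\ep_i} \in \widering{P(\vp)},
\]
i.e. $\alpha_i + \mathds 1 \in (1+\ep_i) \widering{P(\vp)}$.

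Now I use that $(1+\ep)\vp$ is still a toric psh function, with $P((1+\ep)\vp) = (1+\ep)\cdotp P(\vp)$, so that $\widering{P((1+\ep)\vp)} = (1+\ep)\widering{P(\vp)}$. Setting $\ep := \min_i \ep_i > 0$, Theorem \ref{thm:how2} applied to the toric psh function $(1+\ep)\vp$ yields $\mathbf z^{\alpha_i} \in \mathscr I((1+\ep)\vp)$ for every $i$. Thus $\Im_x \subset \mathscr I((1+\ep)\vp)_x \subset \Imp_x$, which completes the proof of equality at the stalk $x$.

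There is really no obstacle here beyond the bookkeeping: the whole content is encapsulated in the openness of the set $\widering{P(\vp)}$ and the finiteness of a generating system, both of which are already guaranteed. The only mildly subtle point worth emphasizing is why it is legitimate to restrict the verification to monomial generators: this uses that $\mathscr I((1+\ep)\vp)$ is itself monomial (again by Theorem \ref{thm:how2}, applied to the toric function $(1+\ep)\vp$), so containment of the monomial generators of $\Im$ in it is equivalent to the full inclusion of ideals.
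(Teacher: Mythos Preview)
Your proof is correct and follows exactly the approach the paper indicates: the paper's justification is the single observation that $P((1+\ep)\vp)=(1+\ep)\cdot P(\vp)$, combined with the characterization in Theorem \ref{thm:how2}, and you have simply unpacked this into an explicit argument using finite monomial generators and the openness of $\widering{P(\vp)}$. One small point you leave implicit is why passing from $\ep_i$ to $\ep=\min_i\ep_i$ is legitimate; this follows either from the convexity of $\widering{P(\vp)}$ (the segment between $\alpha_i+\mathds 1$ and $\frac{\alpha_i+\mathds 1}{1+\ep_i}$ stays inside) or, more directly, from the monotonicity $\mathscr I((1+\ep_i)\vp)\subset\mathscr I((1+\ep)\vp)$ for $\ep\le\ep_i$.
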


\begin{rema}
If $X=(\C^n,0)$ and $\mathbf z=z_1\cdots z_n$, then for any toric psh germ $\vp$ and any holomorphic germ $f$ on $X$, Theorem \ref{thm:how2} implies the following property: 
\[f\,e^{-\vp}\in L^2(X) \quad  \Longrightarrow \quad (\mathbf z \,\cdotp  f) \,e^{-\vp} \in L^{\infty}(X).\]
\end{rema}

\subsection{Valuative interpretation}

We now want give the valuative interpretation of Theorem \ref{thm:how2}, keeping in mind the end of section \ref{sec:mis}. \\
For this, let us briefly recall some classical facts about Kiselman numbers, that can be found in \cite{Dem1}. We fix $\vp$ a psh germ at $0\in \C^n$, $w\in \R_+^n$, and we define $\psi_w(z)= \max_i \frac{1}{w_i} \log |z_i|$ and also $\chi_{w}(t)=\sup_{\{\psi_w<t\}} \vp$, which is a convex function. Then we have: 
\begin{eqnarray*}
v_w(\vp) & = & \sup\left\{\mu \ge 0 , \, \vp \le \mu \psi_w +O(1)\right\} \\
 & = & \max\left\{\mu \ge 0 , \, \vp \le \mu \psi_w +O(1)\right\} \\
 & = &\chi_{w}'(-\infty) \\
 & = & \lim_{-\infty} \frac{\chi_{w}(t)}{t}.
\end{eqnarray*}

\begin{defi}
Let $g$ be a concave function on $D=\R_+^n$. Then we define $\hat g$ the homogenization of $g$ on $D\smallsetminus \{0\}$ by
\[\hat g(w) = \lim_{t \to +\infty}  \frac{g(tw)}{t}.\]
\end{defi}
One reason for which we introduced the homogenization function lies in the following lemma: 

\begin{lemm}
Let $\vp$ be a toric psh function on $D(0, r)\subset \C^n$, and $g$ its attached concave function. Then 
\[v_w(\vp)=\hg(w).\]
\end{lemm}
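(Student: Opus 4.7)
The plan is to dualize via the change of variables $t_i = -\log|z_i|$, which sends $z\to 0$ to $t\to +\infty$ in $D=\R_+^n$, and then to reduce the lemma to a simple comparison along the ray $\R_+w$. By definition of $g$ one has $\vp(z)=-g(t)$, and a direct computation gives $\max_i \frac{1}{w_i}\log|z_i|=-\min_i \frac{t_i}{w_i}$. Hence the defining condition $\vp(z)\le \gamma \max_i \frac{1}{w_i}\log|z_i|+O(1)$ as $z\to 0$ translates into
\[g(t)\ge \gamma \min_i \frac{t_i}{w_i}-C \quad \text{for all $t$ large enough,}\]
and $v_w(\vp)$ is the supremum of those $\gamma\ge 0$ for which such a constant $C$ exists.

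For the inequality $\hg(w)\ge v_w(\vp)$, I would simply specialize the translated inequality to the ray $t=sw$, along which $\min_i t_i/w_i=s$; dividing by $s$ and letting $s\to +\infty$ yields $\hg(w)\ge \gamma$ for every admissible $\gamma$, and hence for the supremum.

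For the converse $v_w(\vp)\ge \hg(w)$, I would combine the concavity and the monotonicity of $g$. Concavity of $g$ restricted to the ray $\R_+w$ makes the secant slope $s\mapsto (g(sw)-g(s_0 w))/(s-s_0)$ non-increasing in $s$, so it decreases to $\hg(w)$ and produces a bound $g(sw)\ge s\hg(w)-C$ valid for every $s$ large. Given now an arbitrary $t$ with sufficiently large coordinates, I would set $s=\min_i t_i/w_i$, so that $t\succeq sw$. Since $f$ is non-decreasing in each variable, so is $g$, and therefore
\[g(t)\ge g(sw)\ge \hg(w)\min_i \frac{t_i}{w_i}-C,\]
which shows that $\hg(w)$ is admissible in the supremum defining $v_w(\vp)$.

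The main subtlety is precisely this last step: the concavity estimate only lives along the ray $\R_+w$, whereas the definition of $v_w(\vp)$ requires the inequality to hold in a full neighborhood of infinity in $D$. The monotonicity of $g$ inherited from that of $f$ is exactly what allows the ray estimate to be propagated off the ray; without it one would only recover the value of $\hg$ in the single direction $w$, with no control on nearby directions. Beyond this propagation step, both directions are essentially bookkeeping in the change of variables $t=-\log|z|$.
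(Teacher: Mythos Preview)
Your proof is correct. It differs from the paper's only in which characterization of the Kiselman number you start from. The paper uses the auxiliary function $\chi_w(t)=\sup_{\{\psi_w<t\}}\vp$ and the identity $v_w(\vp)=\lim_{t\to -\infty}\chi_w(t)/t$: after the same change of variables $x=-\log|z|$, the sublevel set $\{\psi_w<t\}$ becomes $\{x\succ -tw\}$, and monotonicity of $g$ gives directly $\inf_{x\succ -tw}g(x)=g(-tw)$, hence $\chi_w(t)=-g(-tw)$ and the result follows by dividing by $t$ and letting $t\to -\infty$. You instead use the original $\sup$-definition and split into two inequalities, invoking concavity along the ray to produce the linear lower bound and monotonicity to propagate it off the ray. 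The two arguments are essentially dual: the paper uses monotonicity once to collapse a sup over an orthant to a single point, while you use it pointwise to compare $g(t)$ with $g(sw)$. The paper's route is a bit shorter because the $\chi_w$ formulation already packages the ``off-ray'' step; your route is more self-contained since it does not appeal to the equivalence $v_w(\vp)=\lim\chi_w(t)/t$.
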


\begin{proof}[Proof]
We write
\begin{eqnarray*}
\chi_{w}(t) &=& \sup \{\vp(z); \, \forall i, \log |z_i|<tw_i \} \\
 & =& \sup \{-g(x); \, \forall i, x_i>-tw_i) \} \quad [x_i:= - \log |z_i|] \\
 & =& - \inf \{g(x); \, \forall i, x_i>-tw_i \}  \\
 & =& -g(-tw)
\end{eqnarray*}
because $g$ is non-decreasing in each variable. Therefore $\frac{\chi_{w}(t)}{t} = \frac{g(-tw)}{-t}$ and passing to the limit when $t \to -\infty$, we obtain the desired result.
\end{proof}

The next result gives a precise description of the closure $\overline{P(g)}$ of the Newton convex body attached to $g$ in terms of $P(\hg)$. 

\begin{lemm}
Let $g_a$ be a non-decreasing in each variable concave function on $D_a=a+\R_+^n$ for some $a \prec 0$. Setting $g={g_a}_{|D}$, we have the following equalities: 
\[\overline{P(g)} = P(\hg) = \{\lambda \in \R^n; \, \hg \le \la \lambda, \cdotp \ra\}.\]
\end{lemm}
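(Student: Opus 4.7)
The plan is to establish the chain of equalities in three steps.

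First, since $\hg$ is positively homogeneous of degree one, the quantity $\hg(w) - \la \lambda, w\ra$ is itself positively homogeneous in $w$; being bounded above on $D$ is therefore equivalent to being pointwise $\le 0$ on $D$. This gives the identity $P(\hg) = \{\lambda : \hg \le \la \lambda, \cdot\ra\}$ and, as a byproduct, exhibits $P(\hg)$ as an intersection of closed half-spaces, hence as a closed set.

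The inclusion $\overline{P(g)} \subseteq P(\hg)$ is the easy direction: for $\lambda \in P(g)$, dividing the bound $g(tw) \le \la \lambda, tw\ra + O(1)$ by $t$ and letting $t \to \infty$ yields $\hg(w) \le \la \lambda, w\ra$, so $P(g) \subseteq P(\hg)$, and the closure comes for free from the previous step.

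The main step is the reverse inclusion $P(\hg) \subseteq \overline{P(g)}$, which I would prove by showing that $\lambda + \ep\mathds 1 \in P(g)$ for every $\lambda \in P(\hg)$ and every $\ep > 0$; letting $\ep \to 0^+$ then gives $\lambda \in \overline{P(g)}$. To this end, consider the concave function $h := g_a - \la \lambda + \ep\mathds 1, \cdot\ra$ on $D_a$. Its recession function in any direction $w$ in the recession cone $D = \R_+^n$ of $D_a$ equals $h^\infty(w) = \hg(w) - \la \lambda + \ep\mathds 1, w\ra$, which is bounded above by $-\ep \sum_i w_i$ and is therefore \emph{strictly negative} for every $w \in D \setminus \{0\}$. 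The super-level sets $L_M := \{x \in D : h(x) \ge M\}$ are closed convex subsets of $D$; any non-zero recession direction $w$ of $L_M$ would have to lie in $D$ and satisfy $h^\infty(w) \ge 0$, which is impossible. Hence each $L_M$ is bounded, hence compact, and the continuity of $h$ on $D$ forces $\sup_{L_M} h < \infty$; since $h < M$ off $L_M$, one concludes $\sup_D h < \infty$, that is, $\lambda + \ep\mathds 1 \in P(g)$.

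The main obstacle lies in this last step, specifically the reliance on the classical convex-analytic identification between the recession cone of a non-empty closed convex super-level set of a closed concave function and the set $\{h^\infty \ge 0\}$ (cf.\ Rockafellar, \emph{Convex Analysis}, §8), together with the base-point-independence of $h^\infty$. Once these standard ingredients are granted, the whole proof reduces to the observation that the perturbation $\lambda \mapsto \lambda + \ep\mathds 1$ is precisely what yields the strict negativity of $h^\infty$ on $D \setminus \{0\}$ and prevents any super-level set from escaping to infinity.
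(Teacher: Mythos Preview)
Your proof is correct and takes a genuinely different route from the paper's. Both arguments share the same first two steps (homogeneity gives the second equality and closedness of $P(\hg)$; the inclusion $P(g)\subset P(\hg)$ is immediate), and both reduce the hard inclusion to showing that a perturbation $\lambda+\ep\mathds 1$ lies in $P(g)$. The difference is in how this last point is established.

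The paper argues on the compact simplex $\Delta_n$: writing $x=tw$ with $w\in\Delta_n$, it observes that $t\mapsto \frac{1}{t}(g(tw)+C)$ is non-increasing (using that $g$ is bounded below, which in turn uses the non-decreasing hypothesis), and then invokes Dini's theorem to upgrade pointwise convergence $g_t\to\hg$ to uniform convergence. This gives the bound $g(x)\le\la\lambda,x\ra$ for $|x|$ large, while the region $\{|x|\le t_0\}$ is handled by continuity and compactness.

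You instead invoke the Rockafellar machinery: the concave function $h=g-\la\lambda+\ep\mathds 1,\cdot\ra$ has recession function $h^\infty(w)=\hg(w)-\la\lambda+\ep\mathds 1,w\ra\le -\ep\sum w_i<0$ on $D\setminus\{0\}$, so every super-level set has trivial recession cone and is therefore compact, forcing $h$ to be bounded above. This is cleaner and more conceptual; it also happens not to use the ``non-decreasing in each variable'' hypothesis at all (you only need continuity of $g$ on $D$, which already follows from $D\subset\widering{D_a}$). The trade-off is that you are importing a nontrivial package of results from convex analysis (closedness of $-h$ extended by $+\infty$ off $D$, base-point independence of the recession function, and the identification of the recession cone of a sublevel set with $\{(-h)0^+\le 0\}$), whereas the paper's Dini argument is entirely elementary.
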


\begin{proof}[Proof]
As $\hg$ is homogeneous, if $\lambda \in P(\hg)$, then there exists $C>0$ such that  for all $x\in D\smallsetminus \{0\}$ and all $t>0$, we have $\hg(x)= \frac 1 t \hg(tx) \le \la \lambda, x \ra + \frac C t$ so that when $t$ tends to $+\infty$, we obtain the second identity of the lemma, which shows that $P(\hg)$ is closed.\\
Now, we choose $\lambda \in P(g)$, and write for all $x\in D$, and $t>0$: $\frac 1 t g(tx) \le \la \lambda, x \ra + \frac C t$, and then $\lambda \in P(\hg)$. So we have proved $\overline{P(g)} \subset P(\hg)$.\\
As any convex set with non-empty interior has the same closure than its interior, it is enough to show that $P(g)$ and $P(\hg)$ have the same interior. So we choose $\lambda$ in the interior of $P(\hg)$. This means that there exists $\ep>0$ such that for all $x\in D$, $\hg(x) \le \la \lambda - \ep \mathds 1, x \ra$. We write $x=tw$ where $t>0$ and $w\in \Delta_n = \{(w_1, \ldots, w_n) \in \R_+^n; \, \sum w_i=1\}$ is the standard $n$-simplex, which is obviously compact. An important remark is that $g$ and $\hg$ are restrictions of concave functions $g_a$ and $\widehat{g_a}$ to $D\subset \widering{D_a}$, so they are both continuous on $D$.\\
We know that $g$ is non-decreasing in each variable, so $t\mapsto g(tw)$ is a non-decreasing concave function. Clearly, $g$ is bounded below on $D$, thus there exists $C$ such that $h=g+C$ is non-negative on $D$. Then $h_t(w):=\frac 1 t h(tw)$ for $t>0, w\in \Delta_n$, defines a non-increasing (in $t$) family of continuous functions on $\Delta_n$ converging to $\hat{h} =\hg$. Indeed, if $t_2>t_1$, then $ h_{t_1}(w)-h_{t_2}(w) \ge h(0)(\frac 1{t_1}-\frac{1}{t_2})$.\\
By Dini's theorem, the convergence is uniform, as $|h_t(w)-g_t(w)| \le C/t$, then for $t\ge t_0(\ep)$, $||\hg-g_t||_{\Delta_n, \infty} \le \ep$. Thus, for such a $t$, we have:
\begin{eqnarray*} 
g_t(w) &\le& \hg(w)+\ep \\
& \le &   \la \lambda - \ep \mathds 1, w \ra + \ep \\
&= &  \la \lambda , w \ra .
\end{eqnarray*}
If $C = \sup \{g(x)-\la \lambda, x\ra; \, x\in D \, \textrm{and} \, \sum x_i \le t_0\} $, then we have for all $x \in D$, $g(x) \le \la \lambda, x \ra +C$ and therefore $\lambda \in P(g)$. So the interior of $P(\hg)$ is contained in $P(g)$ thus in $\widering{P(g)}$, and as $P(g)\subset P(\hg)$, this concludes the proof of the lemma.
\end{proof}

These two lemmas give now almost immediately the valuative version of Theorem \ref{thm:how2}: 

\begin{theo}
Let $\vp$ be a toric psh germ at $0\in \C^n$. Then $\Im$ is monomial, and: 
\[\mathbf{z}^{\alpha} \in \Im \quad \Longleftrightarrow \quad  \sup_{w \in \R_+^n} \frac{v_w(\vp)}{v_w(\mathbf{z}^{\alpha})+A(w)}<1.\]
\end{theo}

\begin{proof}[Proof]
First of all, we attach to $\vp$ its concave function $g$, and as the singularity is isolated at $0$, we may suppose (by shrinking the domain of $\vp$) that $g$ is the restriction to $D=\R_+^n$ of a concave function on some $D_a=a+\R_+^n$ with $a \prec 0$, so that the preceding lemma applies here, and in particular, $P(g)$ and $P(\hg)$ have same interiors. \\
Thus, using Theorem \ref{thm:how2} and both preceding lemmas, we have: 
\begin{eqnarray*}
\mathbf{z}^{\alpha} \in \Im &\Longleftrightarrow& \alpha + \mathds 1 \in \widering{P(g)}\\
&\Longleftrightarrow& \exists \delta \in ]0,1[; \, \forall w\in D, \, \hg(w) \le (1-\delta)\la \alpha + \mathds 1, w \ra \\
&\Longleftrightarrow& \exists \delta \in ]0,1[; \, \forall w\in D, \, v_w(\vp) \le (1-\delta)\la \alpha + \mathds 1, w \ra \\
&\Longleftrightarrow& \exists \delta \in ]0,1[; \, \sup_{w \in D} \frac{v_w(\vp)}{v_w(\mathbf{z}^{\alpha})+A(w)} \le 1- \delta
\end{eqnarray*}
 which concludes the proof of the theorem.
\end{proof}

\begin{rema}
Compared to Theorem \ref{thm:bfj}, this result tells us that for toric psh functions, multiplier ideals satisfy the openness property, and that they are totally determined by the datum of all \textit{monomial} valuations; namely we don't need to look at divisors lying in some birational model of $(\C^n,0)$ to understand the singularities of toric psh functions. 
\end{rema}


\subsection{An example}
To finish this first part, we illustrate Theorem \ref{thm:how2} with a particular example, for which some computations lead to a rather simple result.\\
Let us define $g(x_1,\ldots,x_n)=k \,x_1^{\alpha_1}\cdots x_n^{\alpha_n}$, with $k>0$ and $\alpha_i \ge 0$ for all $i$. First, we must know whether this function is concave or not. But we can see rather easily that $g$ is concave if and only if $ \alpha_1+ \cdots+\alpha_n \le 1.$\\
Then, following the method suggested by Theorem \ref{thm:how2} and Proposition \ref{prop:comp}, some computations give rise to the following description of the multiplier ideal:
\begin{prop}
Let $\vp(z)=-k\left|\log |z_1|\right|^{\alpha_1}\cdots\left|\log |z_n|\right|^{\alpha_n}$ where the $\alpha_i$ are non-negative real numbers, of sum less or equal than $1$, and $k>0$ be a real number. Then $\vp$ is psh on $D(0,1)$, and:
\begin{enumerate}
\item[$(i)$] Either $\sum \alpha_i <1$, and then $\mathscr I (\vp)=\mathcal O_{D(0,1)};$
\item[$(ii)$] Or $\sum \alpha_i =1$ and then $\mathscr I (\vp)_0$ is generated by the $\mathbf{z}^{\beta}$ such that: 
\[ \prod_{\alpha_i >0} \left(\frac{\beta_i+1}{k\alpha_i}\right)^{\alpha_i}>1.\]
\end{enumerate}

\end{prop}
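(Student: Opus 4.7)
The plan is to reduce the statement to an explicit computation of the Newton convex body attached to a concrete concave function. Setting $x_i=-\log|z_i|$, the toric psh function $\vp$ is associated (in the sense of section 1.3) to $g(x)=k\, x_1^{\alpha_1}\cdots x_n^{\alpha_n}$ on $\R_+^n$. The classical fact that such a generalized geometric mean is concave on $\R_+^n$ if and only if $\sum_i\alpha_i\le 1$ will simultaneously give the psh property of $\vp$ (since the corresponding $f(y)=-g(-y)$ is then convex and non-decreasing in each variable) and the applicability of Theorem \ref{thm:how2}; combined with Proposition \ref{prop:comp}, we are reduced to describing the open set $\widering{P(g)}$.

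The characterization of $P(g)$ comes from estimating $g^*(\lambda)=\sup_{x\in\R_+^n}(g(x)-\la\lambda,x\ra)$. In case (i), with $s:=\sum_i\alpha_i<1$, a Lagrange multiplier computation solving $\nabla g(x)=\lambda$ gives $\alpha_i g(x)=\lambda_i x_i$ and hence $g(x)^{1-s}=k\prod_i(\alpha_i/\lambda_i)^{\alpha_i}$, so that the maximum equals $(1-s)g(x)<+\infty$ for every $\lambda\succ 0$. Thus $P(g)$ contains the entire open positive orthant; in particular $\mathds 1\in\widering{P(g)}$, and Theorem \ref{thm:how2} applied with $\beta=0$ yields $1\in\mathscr I(\vp)$, proving (i).

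In case (ii), $s=1$, the homogeneity $g(tx)=t\,g(x)$ forces $g^*(\lambda)\in\{0,+\infty\}$, with $g^*(\lambda)=0$ iff $g(x)\le\la\lambda,x\ra$ holds pointwise. The weighted AM-GM inequality
\[ k\prod_i x_i^{\alpha_i}=k\prod_i\Bigl(\frac{\lambda_i x_i}{\alpha_i}\Bigr)^{\alpha_i}\prod_i\Bigl(\frac{\alpha_i}{\lambda_i}\Bigr)^{\alpha_i}\le k\Bigl(\sum_i\lambda_i x_i\Bigr)\prod_i\Bigl(\frac{\alpha_i}{\lambda_i}\Bigr)^{\alpha_i},\]
with equality along the ray $x_i=c\,\alpha_i/\lambda_i$ ($c>0$), then shows $\lambda\in P(g)\Longleftrightarrow k\prod_i(\alpha_i/\lambda_i)^{\alpha_i}\le 1$. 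By continuity and strict monotonicity of the left-hand side in each $\lambda_i$, the interior corresponds to the strict inequality. Using $\sum\alpha_i=1$ to rewrite $k=\prod_i k^{\alpha_i}$ and substituting $\lambda=\beta+\mathds 1$ yields exactly the stated criterion, the factors with $\alpha_i=0$ dropping out of the product.

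The main technical obstacle is the clean description of $P(g)$ in case (ii); the conceptual observation is to recognize $g-\la\lambda,\cdotp\ra$ as a scalar multiple of the weighted AM-GM defect, whose supremum over $\R_+^n$ is either $0$ (if the defect is non-negative) or $+\infty$ by homogeneity. A minor point is the treatment of indices with $\alpha_i=0$: one only needs $\lambda_i\ge 0$ for such $i$, which is automatic for $\lambda=\beta+\mathds 1$, so these indices contribute nothing to the final inequality.
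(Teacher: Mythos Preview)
Your argument is correct and follows exactly the route the paper indicates: reduce via Theorem~\ref{thm:how2} to the description of $\widering{P(g)}$ for $g(x)=k\,x_1^{\alpha_1}\cdots x_n^{\alpha_n}$, then carry out the computation. The paper itself gives no details beyond pointing to Theorem~\ref{thm:how2} and Proposition~\ref{prop:comp}, so you are supplying the omitted calculation.

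One minor remark on method: the paper explicitly suggests Proposition~\ref{prop:comp}, i.e.\ reading off $\widering{P(g)}=Gr(g)$ from the range of gradients $\nabla g(v)=(\alpha_i g(v)/v_i)_i$. You instead work with the Legendre transform directly (Lagrange multipliers in case~(i), weighted AM--GM plus degree-one homogeneity in case~(ii)). These are of course two faces of the same convex duality, and in case~(ii) your AM--GM argument is arguably cleaner than chasing the image of $\nabla g$. Your handling of indices with $\alpha_i=0$ is fine: for such $i$ the function is independent of $x_i$, so the condition $\lambda_i>0$ (automatic for $\lambda=\beta+\mathds 1$) contributes nothing to the product, exactly as you note.
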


\section{The analytic adjoint ideal sheaf}

\subsection{Preliminaries}

The adjoint ideal attached to an ideal was introduced in the algebraic setting to deal with extension problems for functions belonging to some multiplier ideals. A general and detailed approach can be found in \cite{Tak} or \cite{Eis}, so we are just going to recall some elementary facts about adjoint ideals. 
\begin{defi}
Let $\a\subset \Ox$ be a non-zero ideal sheaf on a smooth complex variety $X$, $c>0$ a real number, and $D$ a reduced divisor on $X$ such that $\a$ is not contained in any ideal $\mathscr I\!_{D_i}$ of $D_i$ an irreducible component of $D$. We fix $\mu: \wX \to X$ a log resolution of $\a$ such as $\a \cdot \Oxt = \Oxt(-F)$ is such that $F+\mu^*D+K_{\wX/X}+\mathrm{Exc}(\mu)$ is a simple normal crossing divisor. Then the adjoint ideal $\adja$ attached to $c$ and $\a$ is defined by: 
\[\adja = \mu_{*} \Oxt (K_{\wX/X} - [c \cdot F] - \mu^{*} D +D') \]
where $K_{\wX/X}=K_{\wX}-\mu^{*}  K_X, [ \, \, \,]$ denotes the integral part of a divisor, and $D'$ is the strict transform of $D$, defined by linearity. 

\end{defi}

\begin{rema}
To obtain such a resolution, we compose a log resolution $(\mu',X',\mathcal O_{X'}(-F'))$ of $\a$ with a log resolution of $F'+\mu'^*D$. \\
Furthermore, one can show that the previously defined sheaf does not depend on such a log resolution.
\end{rema}

We then have the so-called adjunction exact sequence, appearing (in a hidden way) in the proof of \cite[Theorem 9.5.1]{Laz}: 

\begin{theo}
With the previous notations, and in the case where $D=H$ is a non-singular hypersurface, the following short sequence is exact: 
\[ 0 \longrightarrow \mathscr I (\mathfrak a^c)\otimes \mathcal O_X(-H) \longrightarrow \mathrm{Adj}(\mathfrak a^c,H) \longrightarrow \mathscr I ((\mathfrak a^c) _{|H})\longrightarrow 0\]
\end{theo}

So what we are willing to construct is an analogue of the adjoint ideal which would be attached to any psh function $\vp$. Just as multiplier ideals can be defined using the space of holomorphic germs in $L^2(e^{-\vp}, \mathrm{Leb})$  (it is even their original definition), we would like to find some volume form $\Omega$ such that the space of holomorphic germs in $L^2(e^{-\vp}, \Omega)$ defines adjoint ideals. To find $\Omega$, the intuition is given by the famous Ohsawa-Takegoshi-Manivel theorem (\cite{Dem3}): 

\begin{theo}[Ohsawa-Takegoshi-Manivel]
\phantomsection
Let $X\subset \C^n$ be a bounded pseudoconvex open set, and let $Y\subset X$ be a complex submanifold of codimension $r$, defined by a section $s$ of a hermitian holomorphic bundle with bounded curvature tensor. 
We suppose that $s$ is everywhere transverse to the zero section, and that the inequality $|s|\leqslant e^{-1}$ holds on $X$. 
Then there exists a constant $C>0$ (only depending on $E$) such that: for all psh function $\vp$ on $X$, 
for all holomorphic function $f$ on $Y$ such that $\int_Y |f|^2 |\Lambda^r (ds)|^{-2} e^{-2\vp}dV_{Y} <+\infty$, there exists a holomorphic function $F$ on $X$ extending $f$ such that
\[  \int_{X} \frac{|F|^2}{|s|^2\log^2 |s|}e^{-2\vp}dV_{X} \leqslant C \int_Y \frac{ |f|^2} {|\Lambda^r (ds)|^{2}}e^{-2\vp} dV_{Y} .\]
\end{theo}

So it seems very natural that choosing $\Omega$ to be a Poincaré volume form attached to $H$ (this means that if $H$ is locally given by $\{h=0\}$, then $\mathrm{Poin}_H=\frac{1}{|h|^2 \log^2 |h|} \mathrm{Leb}$) will be the right way to define the analytic adjoint ideal. In this section, we are going to check if things happen as well as predicted.\\\\

Let us now give more general and precise setting. We take a complex manifold $X$ and a simple normal crossing (SNC) divisor $D=\sum D_i$; in the following, we will identify the divisor with its support. Then, for all $x\in X$, there exists a neighborhood $U_x$ of $x$, an integer $0\le p \le n$ and coordinates $z_1, \ldots z_n$ such that $D\cap U_x=\{(z_1,\ldots,z_n)\in U_x; z_1 \cdots z_p=0\} $. In these coordinates, we have obviously:  
\[U_x\smallsetminus D \simeq (\Delta^*)^p \times \Delta^{n-p}, \]
where $\Delta$ is the open unit disk in $\C$, and $\Delta^*$ the punctured disk. If $x \notin D$, then $p=0$.\\

The fundamental object, which is a growth's class of volume forms, is described in the following definition:  
\begin{defi}
Let $X$ be a complex manifold of dimension $n$, $D=\sum D_i$ a simple normal crossing divisor on $X$, and $X_0=X \smallsetminus D$. We say that a positive $(1,1)$-form $\omega_P$ on $X_0$ is $D$-Poincaré if for all sufficiently small open set $U\subset X$ there exists some coordinates $z_1, \ldots , z_n$, $U\cap D =\{(z_1,\ldots,z_n)\in U; z_1 \cdots z_p=0\} $, and some positive constant $C$ such that: 
\[C^{-1}\omega_P \le \frac{i}{2} \left(\sum_{i=1}^{p} \frac{dz_i\wedge d \bar z_i}{|z_i|^2 \log^2 |z_i|}+\sum_{i=p+1}^{n} dz_i \wedge d\bar z_i\right)\le C \omega_P.\]
The associated volume form $\frac{\omega_P^n}{n!}$, which we will denote by $\Omega_P$, is then said to be $D$-Poincaré. So locally, we have up to equivalence:
\[\Omega_P = \prod_{i=1}^p \frac{1}{|z_i|^2 \log^2 |z_i|} \,\, \mathrm{Leb}\, , \]
and the density of $\Omega_P$ is integrable with respect to the Lebesgue measure on $\R^{2n}$.
\end{defi}

\begin{rema}
According to the definition, it is clear that there is a unique $D$-Poincaré volume form on $X$, up to equivalence.
\end{rema}

Let us remark that for a sufficiently small coordinate chart $U\subset X$, if we set $U_0=U\cap X_0$, then the manifold  $\left(U_0, \omega_P= \frac{i}{2} \left(\sum_{i=1}^{p} \frac{dz_i\wedge d \bar z_i}{|z_i|^2 \log^2 |z_i|}+\sum_{i=p+1}^{n} dz_i \wedge d\bar z_i\right)\right)$ is Kähler and weakly pseudoconvex. Indeed, we may suppose $U=D(0,1)$, and then $\vp(z)=-\log(1-|z|^2)-\log |z_1|^2- \, \cdots \, - \log |z_p|^2$ is a smooth psh exhaustion function of $U_0$.

\begin{defi}
\phantomsection
\label{def:iaz}
Let $\vp$ be a psh function on a complex manifold $X$, $D$ an SNC divisor. 
We define the ideal sheaf $\adjzd$ to be made up of the germs $f\in \mathcal O_{X,x}$ such that $|f|^2e^{-2\vp}$ is integrable with respect to some (hence any) $D$-Poincaré volume form near $x$.
\end{defi}

\begin{rema}
We always have $\adjzd \subset \mathscr I (\vp)$, and if $x\notin D$, then $\adjzd_x= \mathscr I (\vp)_x$.
\end{rema}

Unfortunately, our sheaf $\adjzd$ fails to coincide in general with the algebraic adjoint, as the following example shows: 

\begin{cexem}
Let $X= (\C^2,0), \, \a = \mathfrak m^6, \, H=\{z_1=0\}$, and $f(z_1,z_2)=z_1^3z_2^3$. If $\vp_{\a}= 3 \log(|z_1|^2+|z_2|^2)$ is a psh function attached to $\a$, then we have:
\[f \in  \mathcal Adj_H(\vp_{\a}) \smallsetminus \mathrm{Adj}(\mathfrak a,H).\]
Indeed, we are in case $(ii)$ of the next Proposition \ref{prop:ex}, with equality in the first large inequality. Therefore, setting $D=D(0,1), \, \int_D \frac{|f|^2e^{-2\vp}}{|z_1|^2\log^2 |z_1|}dV<+\infty$ but for all $\ep>0$, 
\[\int_D \frac{|f|^2}{|z_1|^2 \log^2 |z_1|}e^{-2(1+\ep)\vp}dV=+\infty.\] 
As the algebraic adjoint satisfies the openness property, $f$ cannot belong to $\mathrm{Adj}(\mathfrak a,H)$. 
\end{cexem}

Let us now give the following result that we used in our counterexample, and which gives a precise description of the "zero" adjoint ideal attached to some coordinates monomials:

\begin{prop}
\phantomsection
\label{prop:ex}
Let $\vp = \frac{k}{2} \log(\sum_{i=1}^n |z_i|^{2\alpha_i})$, with $\alpha_i$ some positive real numbers, just as $k$, and let $H$ be the hyperplane $\{z_1=0\}$. Then the stalk at $0$ of $\adjz$ is a monomial ideal, generated by the $z^{\beta}$ satisfying one of the following conditions: 
\begin{enumerate}
\item[$\mathrm(i)$] $\sum \frac{\beta_i+1}{\alpha_i}>k+\frac 1 {\alpha_1} $
\item[$\mathrm(ii)$] $\sum \frac{\beta_i+1}{\alpha_i}\geqslant k+\frac 1 {\alpha_1} \quad \textrm{and} \quad \beta_1 > 0$.
\end{enumerate}
\end{prop}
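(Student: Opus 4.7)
The plan is to mimic the proof of Theorem \ref{thm:how2}, incorporating two new ingredients: the Poincaré weight $1/(|z_1|^2\log^2|z_1|)$ and the shift by $\widetilde{\mathds 1}$ rather than $\mathds 1$. First, since $\vp$ and the Poincaré density are both toric, the Parseval argument from the proof of Theorem \ref{thm:how2} shows term-by-term that the Taylor expansion of any $f \in \adjz_0$ has monomials in $\adjz_0$, so Lemma \ref{lem:mon} forces $\adjz_0$ to be monomial. Passing to polar coordinates $z_j = r_j e^{i\theta_j}$ and substituting $t_j = -\log r_j$ exactly as there, the condition $z^\beta \in \adjz_0$ becomes equivalent to the convergence of
\begin{equation*}
I(\beta) = \int_{[T,+\infty)^n} \frac{e^{2g(t)-2\langle\tilde A,t\rangle}}{t_1^2}\, dt_1\cdots dt_n,
\end{equation*}
where $g(t)=-\tfrac{k}{2}\log\sum_i e^{-2\alpha_i t_i}$ and $\tilde A=\beta+\widetilde{\mathds 1}=(\beta_1,\beta_2+1,\ldots,\beta_n+1)$. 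Note that $\tilde A_1$ is $\beta_1$, not $\beta_1+1$, and that the Poincaré weight introduces the extra factor $1/t_1^2$.

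Next I would exploit the explicit asymptotic $g(t)=k\min_i\alpha_it_i+O(1)$ on $\R_+^n$ (a direct consequence of $\sum_i e^{-2\alpha_i t_i}=e^{-2\min_i\alpha_i t_i}\chi(t)$ with $\chi(t)\in[1,n]$) to replace $g$ by this leading term without affecting convergence. Setting $s_i=\alpha_i t_i$ and $\gamma_i=\tilde A_i/\alpha_i$, I would then decompose the domain into the closed cones $R_j=\{s_j=\min_i s_i\}$ and introduce $u_i=s_i-s_j\geq 0$ on $R_j$. The exponent rewrites as $-2Gs_j-2\sum_{i\neq j}\gamma_iu_i$ with
\begin{equation*}
G \;=\; \sum_{i=1}^n \gamma_i - k \;=\; \sum_{i=1}^n \frac{\beta_i+1}{\alpha_i} - \frac{1}{\alpha_1} - k,
\end{equation*}
so the cone integral $I_j$ factorizes. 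The $u_i$-integrals for $i\neq 1,j$ converge since $\gamma_i>0$, and the $u_1$-integral (appearing only for $j\neq 1$) equals $\int_0^\infty e^{-2\gamma_1 u_1}/(s_j+u_1)^2\,du_1$, which is asymptotic to $C/s_j^2$ when $\gamma_1>0$ (i.e.\ $\beta_1>0$) and equals exactly $1/s_j$ when $\gamma_1=0$.

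Finally, the remaining one-dimensional integrals in $s_j$ are of two types: $\int_T^\infty e^{-2sG}/s^2\,ds$ (convergent iff $G\geq 0$, the borderline $G=0$ being saved by the $1/s^2$ tail) and $\int_T^\infty e^{-2sG}/s\,ds$ (convergent only when $G>0$ strictly). Hence, if $\beta_1>0$ every cone $R_j$ carries a $1/s_j^2$ weight and $I(\beta)$ converges iff $G\geq 0$; if $\beta_1=0$ the cones $R_j$ with $j\neq 1$ only produce a $1/s_j$ weight, forcing the strict inequality $G>0$. These two alternatives are exactly conditions (i) or (ii) of the statement. The main obstacle is this boundary analysis at $G=0$: the presence or absence of the extra integrable $1/s^2$ tail depends on whether the logarithmic Poincaré weight is coupled to the coordinate $t_1$ through $\beta_1$, and this coupling is precisely what distinguishes (i) from (ii)---and is morally responsible for the failure of the openness property of $\adjz$ exhibited in the preceding counterexample.
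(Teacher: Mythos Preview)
Your proof is correct and takes a genuinely different route from the paper's. The paper performs the weighted substitution $\rho_i=r_i^{\alpha_i}$ followed by spherical coordinates $(t,u)\in(0,\delta)\times\mathbb{S}_+^{n-1}$, reducing the question to the convergence of
\[
I(r,\underline\lambda)=\int_0^\delta\!\int_{\mathbb{S}_+^{n-1}}\frac{t^{\,r}\prod_i u_i^{\lambda_i}}{\log^2(tu_1)}\,du\,dt,
\]
which is then handled by direct upper and lower estimates in three separate bullets. You instead stay in the logarithmic picture $t_i=-\log r_i$, replace $g$ by its piecewise-linear leading term $k\min_i\alpha_it_i$, and break $\R_+^n$ into the cones $R_j=\{s_j=\min_is_i\}$, which yields a clean factorization into one-dimensional Bertrand-type integrals. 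Your approach fits the toric/convex framework of the first part of the paper (it is essentially a sharpening of Proposition~\ref{prop:conc} with an extra $1/t_1^2$ weight), and it makes the dichotomy at the borderline $G=0$ completely transparent: the cone $R_1$ always carries the integrable tail $1/s^2$, whereas the cones $R_j$ with $j\neq 1$ carry $1/s^2$ or $1/s$ according to whether $\beta_1>0$ or $\beta_1=0$. The paper's spherical computation recovers this same dichotomy less directly, via the necessary condition $\lambda_1>-1$ extracted from a lower bound on $I(-1,\underline\lambda)$.
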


\begin{proof}[Proof]
The fact that the ideal is monomial can be easily deduced from the same reasoning as the one made to show that multiplier ideals attached to toric psh functions are monomial.\\
We set $N=\sum \frac{\beta_i+1}{\alpha_i}$.\\
As for the computation of the ideal, after a first polar, then toric change of variables, it boils down to the convergence, for $U\subset D(0,\delta)$, $\delta<1$ (resp. $V$) neighborhood of $0$ in $\C^n$ (resp. $\R_+^n$) of the integral: 
\begin{eqnarray*} 
\int_U \frac{\prod_{i=1}^n |z_i|^{2\beta_i}}{|z_1|^2 \log^2 |z_1|  \left(\sum_{i=1}^n |z_i|^{2 \alpha_i}\right)^k} dV_{\C^n} &=& C \int_V \frac{\prod_{i=1}^n r_i^{2\beta_i+1}}{r_1^2 \log^2 r_1 \left(\sum_{i=1}^n r_i^{2 \alpha_i}\right)^k} dV_{\R^n}  \\
&=& C' \int_{t=0}^{\delta} \int_{u\in \mathbb S_{+}^{n-1}} \frac{t^{2(N-k-1/\alpha_1)-1}\prod_{i=1}^n u_i^{2(\beta_i+1)/\alpha_i-1}}{u_1^{2/\alpha_1} \log^2(tu_1)} du \, dt
\end{eqnarray*}
où $\mathbb S_{+}^{n-1} = \{(x_1,\ldots,x_n)\in \R_+^n; x_1^2+\cdots+x_n^2=1\}$.\\
To simplify the computations, we introduce the following notations: $r=2(N-k-1/\alpha_1)-1$, $\lambda_1=2\beta_1/\alpha_1-1$, and for $i\geqslant 2,\lambda_i=2(\beta_i+1)/\alpha_i-1$. 
So we always have $\lambda_1 \geqslant -1$, and for $i\geqslant 2,\lambda_i >-1$. We now have to estimate the following integral: 
\[  I(r,\underline{\lambda})=\int_{t=0}^{\delta} \int_{u\in \mathbb S_{+}^{n-1}} \frac{t^{r}\prod_{i=1}^n u_i^{\lambda_i}}{\log^2(tu_1)} du \, dt\]
An obvious necessary condition of convergence is $r\geqslant -1$, which is equivalent to $N\geqslant k+\frac 1 {\alpha_1}$. \\

$\bullet$ Let us suppose that we have $r>-1$. Then the integral is bounded above by:
\[  \int_{t=0}^{\delta} \int_{u\in [0,1]^{n}} \frac{t^{r}\prod_{i=2}^n u_i^{\lambda_i}}{u_1\log^2(tu_1)} du \, dt \]
and integrating with respect to $u_1$, the last integral becomes:
\[  \int_{t=0}^{\delta} \int_{u\in [0,1]^{n-1}} \frac{t^{r}\prod_{i=2}^n u_i^{\lambda_i}}{-\log  t} du \, dt <+\infty\]

$\bullet$ We now suppose that $r\geqslant-1$ and $\lambda_1>0$. Then the integral $I(r,\underline{\lambda})$ is less than: 
\[  \int_{t=0}^{\delta} \int_{u\in [0,1]^{n}} \frac{\prod_{i=1}^n u_i^{\lambda_i}}{t\log^2(tu_1)} du \, dt \]
which in turn equals to
\[  \int_{u\in [0,1]^{n}} \frac{\prod_{i=1}^n u_i^{\lambda_i}}{-\log(\delta u_1)} du \, dt <+\infty\]

$\bullet$ Reciprocally, let us assume that $I(r,\underline{\lambda})$ is finite. Thus $r\geqslant -1$, and it remains to show that if $r=-1$, then we necessarily have $\lambda_1>-1$. We use the following equality: 
\[I(-1,\underline{\lambda}) = \int_{u\in \mathbb S_{+}^{n-1}} \frac{\prod_{i=1}^n u_i^{\lambda_i}}{-\log( \delta u_1)} du\]
Then, fixing $\ep = \sqrt 3/2\sqrt{n-1}$, if $u=(u_1,\ldots,u_n)\in  \mathbb S_{+}^{n-1}$ satisfies $u_1 \in [0,\ep]$ and $u_2,\ldots ,u_{n-1}\in [\ep/2,\ep]$, then $x_n\geqslant 1/2$. In fact, $x_n^2\geqslant 1-(n-1)\ep^2=1/4$.\\
So we have the minoration: 
\begin{eqnarray*}
I(-1,\underline{\lambda}) & \geqslant &\int_{u_1=0}^{\ep} \! \int_{u_2=\ep/2}^{\ep}\cdots \int_{u_{n-1}=\ep/2}^{\ep}  2^{-\lambda_n}\frac{\prod_{i=1}^{n-1} u_i^{\lambda_i}}{-\log( \delta u_1)} du_1\cdots du_{n-1} \\
&\geqslant & C \int_{u_1=0}^{\delta \ep} \frac{u_1^{\lambda_1}}{-\log(u_1)} du_{1}
\end{eqnarray*}
where $C$ is a positive constant. But the right hand side is finite if and only if $\lambda_1>-1$, which concludes the proof of the proposition.
\end{proof}

The last counterexample shows us that we have to modify the definition of the analytic adjoint ideal if we want it to extend the usual algebraic adjoint. The goal of the next section is thus to find the correct way to define analytically the adjoint ideal, and to check if this new ideal fits into the generalized adjunction exact sequence.

\subsection{Adjoint ideal attached to a plurisubharmonic function}
\label{sec:comp}
As we saw in the preceding counterexample, our "zero" adjoint ideal doesn't satisfy the expected openness property even in the algebraic case. So the idea is to regularize our ideal: more precisely we suggest the following definition:

\begin{defi}
\phantomsection
\label{def:ia}
With the preceding notations, and those from definition \ref{def:iaz}, we define the analytic adjoint sheaf $\adjd$ to be:
\[\adjd = \bigcup_{\ep>0}\mathcal{A}dj_D^0((1+\ep)\vp) .\]
\end{defi}

In more analytic terms, we can rephrase the definition by saying that $\adjd$ is made up of the germs $f\in \mathcal O_{X,x}$ such that for $\ep>0$ small enough, $|f|^2e^{-2(1+\epsilon)\vp}$ is integrable with respect to any $D$-Poincaré volume form near $x$.\\

A natural question to ask is whether the adjoint ideal is a coherent ideal sheaf. As we will see in Corollary \ref{cor:coh}, if $H$ is a hypersurface of $X$, then it is a consequence of the adjunction exact sequence that the coherence of $\adj$ holds whenever $e^{\vp}$ is locally Hölder continuous. In the general case, one could expect that the coherence should be obtained as the one of the multiplier ideal sheaf, that is solving some well-chosen $\db$-equation with $L^2$ estimates. Unfortunately, a major difficulty appears then, namely solving the $\db$-equation for functions (and not $(n,q)$-forms) with $L^2(\Omega_P)$ estimates, which cannot be deduced from the usual case because of the negativity of the Ricci curvature of the Poincaré metric $\omega_P$.\\

We are now going to show that the sheaf $\adjd$ generalizes the usual adjoint ideal sheaf, in the sense that $\adj$ coincides with the algebraic adjoint ideal whenever $\vp$ has analytic singularities, and that it fits into the adjunction exact sequence. 

\begin{prop}
Let $D$ be an SNC divisor on a smooth complex manifold $X$, $\a$ an analytic ideal sheaf on $X$ not containing any of the ideals of the components of $D$, $c>0$ a real number and $\vp_{c\cdot \a}$ be a psh function attached to $\a^c$. Then the following equality of sheaves holds:
\[ \mathcal Adj_D(\vp_{c\cdot \a}) = \mathrm{Adj}(\a^c,D) .\] 
\end{prop}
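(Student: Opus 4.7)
The plan is to verify the two sheaves agree by pulling both sides back to a common log resolution of $\a$ and matching the resulting conditions exponent by exponent. Since the statement is local, fix $x\in X$ and pick a log resolution $\mu:\wX\to X$ of $\a$ such that $F:=\mu^{-1}\a\cdot\Oxt$ is effective and $F+\mu^*D+K_{\wX/X}+\mathrm{Exc}(\mu)$ has simple normal crossings. By definition of the algebraic adjoint,
\[\mathrm{Adj}(\a^c,D)_x=\mu_*\Oxt\bigl(K_{\wX/X}-\lfloor cF\rfloor-\mu^*D+D'\bigr)_x,\]
so $f\in \mathrm{Adj}(\a^c,D)_x$ is equivalent to: at every $p\in\mu^{-1}(x)$, working in SNC local coordinates $z_1,\ldots,z_n$ at $p$ with multiplicities $a_j,k_j,d_j$ of $F,K_{\wX/X},\mu^*D$ along $E_j:=\{z_j=0\}$ and $S:=\{j:E_j\subset D'\}$, one has $\mathrm{ord}_{E_j}(\mu^*f)\ge \lfloor ca_j\rfloor+d_j-k_j-\mathbf 1_{\{j\in S\}}$ for each $j$.

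For the analytic side, the change-of-variable formula gives
\[\int_V |f|^2 e^{-2(1+\ep)\vp_{c\cdot\a}}\,\Omega_P\ =\ \int_{\mu^{-1}(V)} |\mu^*f|^2\, e^{-2(1+\ep)c\log|s_F|}\,\mu^*\Omega_P.\]
Writing local equations $h_i=0$ for the components $D_i$ of $D$ passing through $\mu(p)$, and $\mu^*h_i=\text{unit}\cdot\prod_j z_j^{m_{ij}}$ (so $\sum_i m_{ij}=d_j$; and for $j\in S$, a unique $i(j)$ has $m_{i(j),j}=1$, while $m_{i,j}=0$ for $i\ne i(j)$), a direct computation near $p$ yields, up to uniformly bounded positive factors,
\[\mu^*\Omega_P\ \asymp\ \prod_j |z_j|^{2k_j-2d_j}\prod_i \frac{1}{\bigl(\sum_j m_{ij}\log|z_j|\bigr)^2}\ \mathrm{Leb}_{\wX}, \qquad e^{-2(1+\ep)c\log|s_F|\circ\mu}\asymp \prod_j |z_j|^{-2(1+\ep)ca_j}.\]
Writing $\mu^*f=\bigl(\prod_j z_j^{b_j}\bigr)\tilde u$ with $b_j=\mathrm{ord}_{E_j}(\mu^*f)$ and $\tilde u$ non-vanishing at a generic point of the stratum through $p$, the integrand is, up to bounded factors, the monomial $\prod_j |z_j|^{2(b_j+k_j-d_j-(1+\ep)ca_j)}$ times the product of log factors.

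Finally I would compare the resulting integrability condition with the algebraic one. Using $\int_0^1 r^\alpha dr<\infty$ iff $\alpha>-1$ together with $\int_0^1 r^{-1}|\log r|^{-2}\,dr<\infty$, and applying iterated polar-coordinates integration, convergence of the full integral near $p$ reduces to a condition in each $z_j$-direction. For $j\notin S$ the log factors provide no help at the boundary exponent, so strict integrability forces $b_j+k_j-d_j-(1+\ep)ca_j>-1$; taking the union over $\ep>0$ as in Definition~\ref{def:ia} gives the integer inequality $b_j\ge\lfloor ca_j\rfloor+d_j-k_j$. For $j\in S$ one has $d_j=1$, and $\bigl(\sum_{j'}m_{i(j),j'}\log|z_{j'}|\bigr)^{-2}\le(\log|z_j|)^{-2}$ because $m_{i(j),j}=1$ and every term in the sum has the same sign near $0$; this extra $\log^{-2}$ factor saves the borderline exponent, and the condition relaxes to $b_j\ge\lfloor ca_j\rfloor-k_j$. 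Both cases combine to $b_j\ge\lfloor ca_j\rfloor+d_j-k_j-\mathbf 1_{\{j\in S\}}$, matching the algebraic side. The main obstacle is the last sentence: when several components of $\mu^*D$ meet at $p$, the log factors couple several $\log|z_{j'}|$, and one must verify via a careful Fubini argument (integrating strict-transform directions first) that only the single $|\log|z_j||$-term coming from $D_{i(j)}$ is essential at borderline exponents, so that no parasitic obstruction is created in other directions.
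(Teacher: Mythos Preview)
Your approach is essentially the paper's: pull back to a log resolution, compute the Poincar\'e density and weight in SNC coordinates, and reduce the question to a monomial integrability criterion with coupled logarithmic factors. The paper packages exactly the ``careful Fubini argument'' you flag at the end as a separate Lemma~\ref{lem:int}: writing $\lambda_k(\ep)=2(c_k+d_k-e_k-(1+\ep)ca_k)+1$, it shows that
\[I(\ep')=\int_V \frac{\prod_k x_k^{\lambda_k(\ep')}}{\prod_{k\le p}\log^2\bigl(x_k\prod_{b_{k,j}>0}x_j\bigr)}\,dx\]
converges for all $0<\ep'\le\ep$ iff $\lambda_k(\ep)\ge -1$ for every $k$. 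The proof integrates the strict-transform variables $x_1,\dots,x_p$ first via the identity $\int_0^\delta \frac{y^{-1}}{\log^2(xy)}\,dy=\frac{1}{-\log(\delta x)}$, which kills the coupled logs and leaves a product of $x_k^{\lambda_k(\ep')}$ with $\lambda_k(\ep')>-1$ for $k>p$ (here $a_k>0$ is used). This is precisely your sketch, so you are on the right track; you just need to carry it out.

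One genuine loose end: your reduction ``$\mu^*f=(\prod_j z_j^{b_j})\tilde u$ with $\tilde u$ non-vanishing at a generic point'' only gives the direction $\mathrm{Adj}(\a^c,D)\subset\mathcal Adj_D(\vp_{c\cdot\a})$ cleanly (bound $|\tilde u|$ above). For the converse, at a point $p$ where $\tilde u(p)=0$ you cannot read off the necessary lower bounds on the $b_j$ from convergence of the integral near $p$. The paper avoids this by observing that the pulled-back weight and Poincar\'e density depend only on $|z_1|,\dots,|z_n|$, so Parseval's theorem reduces to the case $\mu^*f=\prod z_j^{d_j}$ a pure monomial; then both inequalities are immediate. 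You should either insert that Parseval step, or argue more carefully by testing near a generic point of each $E_j\cap\mu^{-1}(U)$ where $\tilde u$ does not vanish.
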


\begin{proof}[Proof]
We write
\[\vp =\frac c 2 \log(|f_1|^2+\cdots + |f_N|^2)+O(1)\]
 in the neighborhood of the poles, for $f_i$ local generators of $\a$, and we write $D=\sum_{i=1}^p D_i $.\\
There exists a modification $\mu: X'\to X$, with exceptional divisors $E_1, \ldots , E_{m}$ such that $\mu^*\mathfrak a = \mathcal O_{X'}(-F)$ where $F=\sum_{j=p+1}^{m} a_j E_j$ is such that $F+\mu^*D+K_{X'/X}+\mathrm{Exc}(\mu)$ has simple normal crossings, and satisfies for all $j\geqslant p+1, a_j>0$ (for $j\in \{1,\ldots,p\}$, we set $a_j=0$). Moreover, for $i\in \{1,\ldots,p\}, E_i$ denotes the strict transform of $D_i$. 
To sum up, we use the following notations: 
\begin{eqnarray*}
\mu^*\mathfrak a &=& \sum_{j=p+1}^{m} a_j E_j \\
\mu^* D_i &=& E_i + \sum_{j=p+1}^m b_{i,j} E_j \\
K_{X'}&=&\mu^*K_X + \sum_{j=1}^m c_j E_j
\end{eqnarray*}

We choose $x\in X$, which will be $0$ in our chart. To simplify the notations, we suppose that $p$ is chosen such that $x \in D_1\cap \cdots \cap D_p$. We take the local generators $x_1, \ldots, x_p$ of $\mathcal O_{X}(-D_1), \ldots, \mathcal O_{X}(-D_p)$ respectively. Similarly, $z_k$ will be a local generator of $\mathcal O_{X'}(-E_k)$. \\
If $f$ is a germ of holomorphic function near $x$, defined on a sufficiently small neighborhood $U$ of $0$, we have to compute the following expression: 
\[ \int_{U}\frac{|f|^2 e^{-2(1+\ep)\vp}}{\prod_{k=1}^p |x_k|^2 \log ^2 |x_k|} dV=  \int_{U'=\mu^{-1}(U)}\frac{|f\circ \mu|^2 e^{-2(1+\ep)\vp\circ \mu}}{\prod_{k=1}^p |x_k\circ \mu|^2 \log ^2 |x_k\circ \mu|} \,|J_{\mu}|^2 \, dV' \]
\vspace*{3mm}

Thanks to Parseval's theorem, if a function $f$ is such that the right hand side is finite, then all monomials in the Taylor expansion of $f$ satisfy the same property. So there is no loss of generality in supposing that $f\circ \mu = \prod z_j^{d_j}$. Thus, up to a non-zero multiplicative constant, the right hand side is (we may suppose that $U'$ is contained in a polydisk $D(0,R)$ with $R<1$): 

\[\int_{U'} \frac{\prod_{k=1}^m |z_k|^{2(c_k+d_k-(1+\ep)ca_k)}}{\prod_{k=1}^p \left[|z_k|^2 \log^{2} (|z_k| \prod_{j>p} |z_j|^{b_{k,j}})\right] \cdotp \prod_{k>p} |z_k|^{2e_k} } dV' \]
where we set, for $k>p$, $e_k=\sum_{i=1}^p b_{i,k}$. Setting then $k\in \{1,\ldots,p\}, e_k=1$, the previous integral can be written: 
\[\int_{U'} \frac{\prod_{k=1}^m |z_k|^{2(c_k+d_k-e_k-(1+\ep)ca_k)}}{\prod_{k=1}^p  \log^{2} (|z_k| \prod_{j=1}^m |z_j|^{b_{k,j}})} dV' \]
We set $\lambda_k(\ep)=2(c_k+d_k-e_k-(1+\ep)ca_k)+1$ for all $1\le k \le m$, and changing to polar coordinates leads us to estimate the following integral, on $V$ a neighborhood of $0$ in $\R_+^m$: 
\[I(\ep)=\int_V \frac{\prod_{k=1}^m x_k ^{\lambda_k(\ep)}}{\prod_{k=1}^p \log^2( x_k \prod_{b_{k,j}>0} x_j)}dx_1\ldots dx_m \, \]
and $V\subset B(0,r)$ for some $r<1$. The question of the convergence is answered by the lemma \ref{lem:int} given at the end of the proof.\\

Furthermore, we already know that for $k\in \{1,\ldots,p\}$, we have $\lambda_k(\ep)=2(c_k+d_k-1)+1=2(c_k+d_k)-1\ge -1$. About the condition concerning $k>p$, it is equivalent to: 
\[c_k+d_k\geqslant e_k+[(1+\ep)ca_k].\]
But for all real number $x\geqslant 0$, we have $[(1+\ep)x]=[x]$ for $\ep>0$ small enough, and more precisely for $\ep<([x]+1)/x-1$.\\
Putting all these results together, we have shown that $f\in \adjd$ if and only if for all $k$, we have $d_k\geqslant - (c_k-[ca_k]-e_k)$. Now, let us remind that $\mu^*D-D'= \sum_{k>p} e_k E_k$, so that the previous condition is equivalent to: 
$f\in \mu_{*} \mathcal O_{X'} (K_{X'/X} - [c \cdot F] - \mu^{*} D +D')$, which shows the proposition. \\
\end{proof}

\begin{lemm}
\phantomsection
\label{lem:int}
The integral $I(\ep')$ converges for all $0<\ep'\le\ep$ if and only if for all $k\in \{1,\ldots, m\}$, we have $\lambda_k(\ep) \ge -1$.
\end{lemm}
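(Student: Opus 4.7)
The strategy is to reduce the multidimensional convergence question to a product of one-variable integrals via Fubini's theorem, relying on the elementary dichotomy
\[\int_0^\delta \frac{x^\lambda}{\log^{2N}(Cx)}\,dx < +\infty \quad \iff \quad \lambda > -1, \text{ or } \lambda = -1 \text{ and } N \ge 1\]
(valid for $C>0$ with $C\delta<1$ and $N\ge 0$). All the complexity of $I(\ep')$ then sits in the fact that the logarithmic denominators couple several variables at once, and my job is to decouple them.

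For the necessity of $\lambda_k(\ep) \ge -1$, I would argue by contradiction. If $\lambda_{k_0}(\ep) < -1$, freeze the variables $x_j$ for $j \ne k_0$ inside a small box in $V$ on which all remaining log denominators stay bounded and all other power factors stay bounded below by a positive constant. Fubini--Tonelli applied to the positive integrand then forces convergence of a one-dimensional integral of the form $\int_0^\delta x_{k_0}^{\lambda_{k_0}(\ep)}\log^{-2M}(C\, x_{k_0})\,dx_{k_0}$ for some $M\ge 0$, which the one-variable dichotomy rules out as soon as $\lambda_{k_0}(\ep)<-1$.

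For the sufficiency, the key elementary inequality is $|\log(ab)|\ge |\log a|$ for $a,b\in (0,1)$, which gives on $V$
\[\frac{1}{\log^2\!\bigl(x_k \prod_{b_{k,j}>0} x_j\bigr)}\;\le\; \frac{1}{\log^2 x_k}.\]
This bounds the integrand of $I(\ep')$ by the separated product $\prod_k x_k^{\lambda_k(\ep')}\cdot\prod_{k=1}^p \log^{-2}(x_k)$, so $I(\ep')$ is dominated by a product of one-variable integrals to which the dichotomy above applies termwise. For $k\le p$ the log factor supplies exactly the margin needed to handle the borderline case $\lambda_k(\ep')=-1$. For $k>p$, no log factor survives in the bound, but $\lambda_k(\ep') = \lambda_k(\ep)+2(\ep-\ep')ca_k$ is strictly decreasing in $\ep'$ (since $a_k>0$ for $k>p$), so the assumption $\lambda_k(\ep)\ge -1$ yields $\lambda_k(\ep')>-1$ for every $\ep'<\ep$, which is enough.

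The step I expect to be the most delicate is the edge case $\ep'=\ep$ with $\lambda_k(\ep)=-1$ for some $k>p$: the crude separated upper bound throws away too much. I would handle this by sharpening the inequality to $\log^{-2}(x_k\prod_j x_j)\le \log^{-2}(x_{j_0})$ for any index $j_0$ appearing in the product, thereby recovering log help for any exceptional variable $x_{j_0}$ ($j_0>p$) provided $b_{k',j_0}>0$ for some $k'\le p$. Everything else is bookkeeping, but making this refined bound work uniformly in $\ep'\in(0,\ep]$ is where the real analytic content lies.
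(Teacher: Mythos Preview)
Your argument for necessity and for sufficiency when $0<\ep'<\ep$ is correct, and takes a slightly more direct route than the paper. The paper first bounds $x_k^{\lambda_k(\ep')}\le x_k^{-1}$ for $k\le p$, then integrates out $x_1,\ldots,x_p$ using the explicit identity $\int_0^\delta \frac{dy}{y\log^2(xy)}=\frac{1}{-\log(\delta x)}$, leaving an integral over $x_{p+1},\ldots,x_m$ with a harmless log denominator. Your decoupling via $|\log(x_k\prod_j x_j)|\ge|\log x_k|$ separates the integral into a product of one-variable integrals in a single stroke; both approaches then conclude from $\lambda_k(\ep')>-1$ for $k>p$.

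On the edge case $\ep'=\ep$: you should not try to prove it. The paper's own proof treats only $0<\ep'<\ep$ (it writes ``the following inequality holds for all $0<\ep'<\ep$: $\lambda_k(\ep')>-1$'') and never addresses $\ep'=\ep$. Your refinement would require $b_{k',j_0}>0$ for some $k'\le p$, but this can genuinely fail: an exceptional divisor $E_{j_0}$ with $j_0>p$ need not lie over any component of $D$, so $b_{k,j_0}=0$ for all $k\le p$ is possible, and then $I(\ep)$ diverges when $\lambda_{j_0}(\ep)=-1$. Thus the lemma as stated is slightly too strong at $\ep'=\ep$; what is actually proved (and all that is needed for the application, since membership in $\adjd$ only asks for convergence at \emph{some} small $\ep$) is the strict-inequality range $0<\ep'<\ep$.
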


\begin{proof}[Proof]
The condition is obviously necessary by the usual criterion which determines the integrability near $0$ of $x^{\alpha} \log^{\beta} x$ .\\
Reciprocally, we suppose that for all $k$, we have $\lambda_k(\ep)\ge -1$. Then, as for all $k>p$, we have $a_k>0$, the following inequality holds for all $0<\ep'<\ep$: $\lambda_k(\ep')>-1$. To conclude, we are going to use the identity
\[ \int_{]0,\delta[^{2}} \frac{x^ay^{-1}}{\log^2(xy)} dy dx= \int_0^{\delta} \frac{x^a}{-\log (\delta x)} dx = -\delta^{1-a} \int_0^{\delta^2} \frac{x^a}{\log x} dx\]
in the following computation:
\begin{eqnarray*}
I(\ep')&=&\int_V \frac{\prod_{k=1}^m x_k ^{\lambda_k(\ep')}}{\prod_{k=1}^p \log^2( x_k \prod_{b_{k,j}>0} x_j)}dx_1\ldots dx_m \\
& \le & \int_V \frac{\prod_{k=1}^p x_k^{-1} \prod_{k>p} x_k^{\lambda_k(\ep')}}{\prod_{k=1}^p \log^2( x_k\prod_{b_{k,j}>0} x_j)}dx_1\ldots dx_m \\
& \le & C \int_{V'} \frac{ \prod_{k>p} x_k^{\lambda_k(\ep')}}{|\prod_{k=1}^p \log( \prod_{b_{k,j}>0} x_j)|}dx_{p+1}\ldots dx_m \\
& <  & +\infty
\end{eqnarray*}
where $V'$ is a neighborhood of $0$ in $\R_+^{m-p}$.
\end{proof}

\subsection{Adjoint ideal of a monomial ideal}

We would like to give a precise description of the adjoint ideal attached to a monomial ideal, just as Howald's theorem does for multiplier ideals. Unfortunately, the statement corresponding to the adjoint ideal is a little more complicated.\\
So we work locally and we are given an ideal $\a = (\mathbf z^{\alpha_1}, \ldots, \mathbf{z}^{\alpha_k}) \subset \C[z_1, \ldots, z_n]$, together with the hypersurface $H \subset \C^n$ defined by $\{z_1=0\}$. We know that the Newton polyhedron $P(\a)$ attached to $\a$ has exactly $n$ (infinite) faces $F_1, \ldots, F_n$ which are orthogonal to $e_1=(1, 0, \ldots,0), \ldots, e_n=(0, \ldots, 0, 1) $ respectively, and all other faces of $P(\a)$ are not included in any affine hyperplane $\{x_p= \mathrm{const}\}$. We recall that the relative interior $\mathrm{ri}(F_p)$ of a face $F_p$ is the interior of $F_p$ as embedded in some affine hyperplane $\{x_p=\mathrm{const}\}$. Finally, we define $\widetilde{\mathds 1}:=(0,1, \ldots, 1)$.\\
Now we can state the desired result: 

\begin{theo}
\phantomsection
\label{thm:adjm}
Let $\a = (\mathbf z^{\alpha_1}, \ldots, \mathbf{z}^{\alpha_k}) \subset \C[z_1, \ldots, z_n]$ be a monomial ideal, $H=\{z_1=0\}$ such that $(z_1) \nsubseteq \a$ . Then, for every $c>0$,  $\mathrm{Adj}(\a^c, H)$ is a monomial ideal, and  
\[\mathbf z^{\beta} \in \mathrm{Adj}(\a^c, H) \quad \Longleftrightarrow \quad \beta + \widetilde{\mathds 1}  \in  c \, \cdotp \widering{P(\a)} \, \cup \, c\, \cdotp \mathrm{ri}(F_1).\]
\end{theo}

\begin{proof}[Proof]
We are going to use the analytic definition of $\mathrm{Adj}(\a, H)=\mathcal{A}dj_H(\vp)$ where $\vp$ is attached to the concave function $g=c \, \min_i \la \alpha_i, \cdotp \ra$. Then $\mathbf z^{\beta} \in \adj$ if and only if there exists $\ep>0$ such that on a neighborhood $U$ of $0$ in $\C^n$, the integral
\[\int_V \frac{|\mathbf z|^{2 \beta}e^{-2(1+\ep)g(-\log |z_1|, \ldots, - \log |z_n|)}}{|z_1|^2 \log^2 |z_1|}dV\]
converges. But after performing the usual changes of variables, the convergence of this integral is equivalent to the one of 
\[\int_{[1, +\infty[^n} \frac{e^{2((1+\ep)g(t)-\la A,t \ra )}}{t_1^2}dt_1\cdots dt_n,\]
where $A=\beta + \widetilde{\mathds 1}$. There is no loss of generality in replacing $\alpha_i$ by $c \, \alpha_i$, so we shall suppose that $c=1$ in the following.   \\

$\bullet$ First, we suppose that this integral converges for some $\ep>0$. This implies that for all $\eta>0$, the integral 
\[\int_{[1, +\infty[^n} e^{2((1+\ep)g-\la A+(\eta, 0 , \ldots, 0), \cdotp \ra )}dt\]
converges, so that, thanks to Proposition \ref{prop:conc}, we have:
\begin{equation}
\label{eq:int}
\forall \eta>0, \quad A+(\eta, 0 , \ldots, 0) \in (1+\ep) \widering{P(g)}\\
\end{equation}
 We claim that (\ref{eq:int}) is equivalent to 
\begin{equation}
\label{eq:int2}
A \in \widering{P(\a)} \cup \mathrm{ri}(F_1)
\end{equation}

The implication (\ref{eq:int2}) $\Rightarrow$ (\ref{eq:int}) is clear because as $(z_1) \nsubseteq \a$,  $F_1\subset \{x_1=0\}$ contains thus the infinite face orthogonal to $e_1$ attached to $(1+\ep)P(\a)$, so that (\ref{eq:int}) holds.\\
As for the other direction, we first show that if $A$ belongs to some face $F$ of $P(\a)$ which is not one of the $F_i$'s ($i\ge 1$), then for all $\ep>0$, (\ref{eq:int}) fails to be true. Indeed, as each $\alpha_i$ has non-negative components, $F$ is included in some affine hyperplane $\{x; \la x,w \ra = \alpha \}$ where $w\neq 0$ has non-negative components, and $\alpha > 0$ (because $F$ is not one of the $F_i$'s). Thus we have $(1+\ep)\widering{P(\a)} \subset \{x; \la x,w \ra > (1+\ep) \alpha \ra\}.$\\
Therefore we should have for all $\eta>0$: $\la A+(\eta, 0, \ldots, 0), w \ra > (1+\ep) \alpha$, or equivalently $\eta w_1 > \ep \alpha$, which is absurd because $\eta$ can be arbitrarily small.\\
The case where $A$ belongs to one of the faces $F_2, \ldots, F_n$ is immediate, so we have proved that if $\mathbf z^{\beta} \in \adj$, then (\ref{eq:int2}) holds.\\

$\bullet$ Conversely, if $A \in \widering{P(\a)} $, then $A \in (1+\ep)\widering{P(\a)} $ for $\ep>0$ sufficiently small, so that $e^{2[(1+\ep)g - \la A, \cdotp \ra]}$ is integrable. In the case where $A \in \mathrm{ri}(F_1)$, then there exists some $\lambda=(0,\lambda_2, \ldots, \lambda_n) \in \R \times (\R_+^*)^{n-1}$ and some barycentric coefficients $t_i$ such that $A=\sum t_i \alpha_i + \lambda$. As $g\le \sum t_i \alpha_i$, we have: $(1+\ep)g - \la A, \cdotp \ra \le \la \ep A - \lambda, \cdotp \ra.$ As $F_1\subset \{z_1=0\}$, the first component $A_1$ of $A$ is zero, so that if we choose $0< \ep < \min \{\frac{\lambda_i}{2A_i}; i \ge 2\}$, then 
\[e^{(1+\ep)g(t) - \la A, t \ra} \le e^{-\lambda_2t_2/2} \cdots e^{-\lambda_nt_n/2}\] 
and thus the integral 
\[\int_{[1, +\infty[^n} \frac{e^{2((1+\ep)g(t)-\la A,t \ra )}}{t_1^2}dt_1\cdots dt_n\]
is convergent. Therefore $\mathbf z^{\beta} \in \adj$, which concludes the proof of the theorem.
\end{proof}

\begin{rema}
The polyhedral structure of $P(\a)$ is thus crucial to read the adjoint ideal of $\a$ on its Newton polyhedron. Therefore, as the Newton convex body attached to a toric psh function $\vp$ is in general not polyhedral (take $g$ \---the concave function attached to $\vp$\--- to be any non affine smooth concave function, and use the description of $P(g)$ in terms of gradients given in Proposition \ref{prop:comp}), it seems hopeless to generalize Theorem \ref{thm:adjm} to the general toric psh case.
\end{rema}

\subsection{The adjunction exact sequence}
We turn now to the generalized adjunction exact sequence. To prove the validity of the adjunction exact sequence in the analytic setting, we are going to use in a essential manner the proof of the so-called inversion of adjunction, that we may find in \cite{DK}. The main difficulty we will face is to show that the restriction map is well-defined. As for seeing that this restriction induces a surjection, it will be a straightforward consequence of the Ohsawa-Takegoshi-Manivel theorem.\\
Before going into the proof, we give an easy but useful result:

\begin{lemm}
\phantomsection
\label{lem:se}
Let $\Omega\subset \C^n$ an open set that is relatively compact in the unit polydisk, let $\vp$ a psh function on $\Omega$ such that for all $z\in \Omega, \vp(z)\leqslant -1$, let $f$ be an holomorphic function on $\Omega$, and $\alpha>0$ a real number. \\
If there exists $\ep>0$ such that $\int_{\Omega} \frac{|f|^2e^{-2(1+\ep)\vp}}{(-\vp)^\alpha} dV_{\Omega}$ converges, then there exists $\ep'>0$ such that the integral
$ \int_{\Omega} |f|^2e^{-2(1+\ep')\vp}dV_{\Omega}$ converges.\\
In particular, if $\int_{\Omega} \frac{|f|^2e^{-2(1+\ep)\vp}}{\log^2 |z_n|} dV_{\Omega}$ converges, then $\int_{\Omega'} |f|^2e^{-2(1+\ep')\vp}dV_{\Omega}$ converges too, for some $\ep'>0$ and all $\Omega'\Subset\Omega$.
\end{lemm}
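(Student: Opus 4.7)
The strategy is to prove the first claim by a direct pointwise estimate, and then to derive the second one by applying the first to a well-chosen auxiliary plurisubharmonic function.

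For the first claim, I would fix any $\ep' \in (0,\ep)$, set $\delta := \ep-\ep' > 0$, and factor the target integrand as
\[
|f|^2 e^{-2(1+\ep')\vp} \;=\; \frac{|f|^2 e^{-2(1+\ep)\vp}}{(-\vp)^\alpha} \cdot (-\vp)^\alpha\, e^{2\delta\vp}.
\]
Using the hypothesis $\vp\le -1$, the variable $t := -\vp$ lies in $[1,+\infty)$ and the second factor rewrites as $t^\alpha e^{-2\delta t}$. Since this real function is continuous on $[1,+\infty)$ and tends to $0$ at infinity (because $\delta>0$), it is bounded by some constant $M=M(\alpha,\delta)$, so that
\[
\int_\Omega |f|^2 e^{-2(1+\ep')\vp}\, dV_\Omega \;\le\; M \int_\Omega \frac{|f|^2 e^{-2(1+\ep)\vp}}{(-\vp)^\alpha}\, dV_\Omega \;<\; +\infty.
\]

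For the ``in particular,'' my plan is to apply the first part with $\alpha=2$ to the psh function $\Psi := \vp + s\log|z_n|$ for a small parameter $s>0$: it is psh as a sum of two psh functions, and since $|z_n|<1$ on $\Omega$ and $\vp\le -1$, one checks that $-\Psi \ge -\vp \ge 1$, so $\Psi\le -1$. The elementary bound $(-\Psi)^2 \ge s^2\log^2|z_n|$ would let me convert the hypothesis involving $1/\log^2|z_n|$ into the $1/(-\Psi)^2$-type hypothesis needed to invoke the first claim for $\Psi$, while on the conclusion side the inequality $e^{-2(1+\ep')\Psi} \ge e^{-2(1+\ep')\vp}$ (from $|z_n|<1$) turns the resulting integrability back into the desired one on $\Omega'\Subset\Omega$.

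The hard part will be that making this substitution introduces an extra factor $|z_n|^{-2s(1+\eta)}$ on the hypothesis side, which is unbounded near $\{z_n=0\}$. I would handle it by taking both $s$ and $\eta \in (0,\ep)$ sufficiently small and splitting the integration region into $\{-\log|z_n| \le \tfrac{\ep-\eta}{s(1+\eta)}(-\vp)\}$ (on which the extra factor is absorbed by the improvement of the exponent from $1+\eta$ back to $1+\ep$ in the given hypothesis) and its complement (on which $-\vp$ is controlled by $-\log|z_n|$, so that $e^{-2(1+\eta)\vp}$ contributes only a mildly singular $|z_n|^{-2(1+\eta)/c}$-factor, integrable over $\Omega'$ once $s$ is chosen small enough). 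Combining the two pieces produces the required $\ep'>0$.
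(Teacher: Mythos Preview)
Your first paragraph is correct and is exactly the paper's argument: the paper takes $\ep'=\ep/2$ and uses $C=\inf_{x\ge 1} e^{\ep x}/x^\alpha>0$, which is the same pointwise bound you wrote as $t^\alpha e^{-2\delta t}\le M$.

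For the second claim your proposal works, but it takes a detour that the paper avoids. The paper does not introduce any auxiliary $\Psi=\vp+s\log|z_n|$; it goes straight to the region split $A=\{\vp\le \tfrac14\log|z_n|\}$, $B=\{\vp\ge \tfrac14\log|z_n|\}$. On $A$ one has $\log^2|z_n|\le 16\,\vp^2$, so the given hypothesis immediately becomes the $(-\vp)^{-2}$--hypothesis of the first part \emph{for $\vp$ itself}, yielding $\int_A|f|^2e^{-2(1+\ep')\vp}<\infty$. On $B$ one has $-2(1+\delta)\vp\le -\tfrac{1+\delta}{2}\log|z_n|$, and since $f$ is bounded on $\Omega'\Subset\Omega$ the resulting $|z_n|^{-(1+\delta)/2}$ is integrable. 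Your plan performs the same split, but only \emph{inside} the verification of the hypothesis for $\Psi$; after that you still invoke the first part for $\Psi$ and then pass back to $\vp$. So the auxiliary $\Psi$ buys nothing: the region split already does all the work, and applying the first part directly to $\vp$ on the set $A$ is shorter and avoids having to tune the extra parameter $s$.
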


\begin{proof}[Proof]
We set $C =\inf \{e^{\ep x}/x^{\alpha}; x\geqslant 1\}$, it is a positive number. Then the inequality 
\[\int_{\Omega} \frac{|f|^2e^{-2(1+\ep)\vp}}{(-\vp)^\alpha} dV_{\Omega} \geqslant C  \int_{\Omega} |f|^2e^{-2(1+\ep/2)\vp}dV_{\Omega}\]
 shows the first assertion. \\
As for the second, we define $A=\{z\in \Omega; \vp(z) \leqslant \frac 1 4 \log |z_n|\}$ and $B = \{z\in \Omega; \vp(z) \geqslant \frac 1 4 \log |z_n|\}$. Then
\[\int_{A} \frac{|f|^2e^{-2(1+\ep)\vp}}{\log^2 |z_n|} dV_{\Omega} \geqslant \int_A \frac{|f|^2e^{-2(1+\ep)\vp}}{16 \,  \vp^2} dV_{\Omega}\]
and using the first part, this implies that $\int_{A} |f|^2e^{-2(1+\ep')\vp}dV_{\Omega}$ is finite for some $\ep'>0$.\\
Furthermore, setting $\delta = \min (\ep',1)$, the following inequality holds on $B$: $-2(1+\delta) \vp \leqslant -(1+\delta)/2 \log |z_n|$, thus: 
\[\int_{B\cap \Omega'}  |f|^2e^{-2(1+\delta)\vp}dV_{\Omega} \leqslant ||f||_{L^{\infty}(\Omega')} \int_{\Omega} |z_n|^{-\frac{1+\delta}{2}}dV_{\Omega}<+\infty\]
which concludes the proof of the lemma.
\end{proof}

Now we can prove the main result of this section:

\begin{theo}
\phantomsection
\label{thm:se}
Let $X$ be a complex manifold, $H\subset X$ a smooth hypersurface, and $\vp$ a psh function on $X$, $\vp_{|H} \neq -\infty$, such that $e^\vp$ is locally Hölder continuous, and let $i:H\hookrightarrow X$ be the inclusion. The the natural restriction map induces the following exact sequence: 
\[ 0 \longrightarrow \Imp \otimes \mathcal O_X(-H) \longrightarrow \adj \longrightarrow i_*\mathscr I_{\!+} (\vp _{|H})\longrightarrow 0\]
\end{theo}

\begin{proof}[Proof]
What we have to check is that the restriction map is well-defined, that it is surjective, and that this sequence is exact. We proceed in the order we just described.
As everything is purely local, we may assume that $H$ is the hyperplane $z_n=0$ in the polydisk $U=D(0,r),r<1$ in $\C^n$. Moreover, since changing $\vp$ into $\vp-C$ does not affect the questions of integrability, and since $\vp$ is locally upper bounded, we may assume that $\vp\leqslant -1$ on $U$, so that we can apply the preceding lemma \ref{lem:se}.\\
So, we choose a holomorphic non-zero function $F$, defined on a neighborhood $U$ of $0$, and satisfying $F \in \adj(U)$. We write then $F(z)=F(z',z_n)=(F(z',z_n)-F(z',0))+F(z',0)$, and as $F$ is holomorphic, there exists a constant $C_1>0$ such that
$|F(z',0)|^2 \leqslant C_1 |z_n|^2+|F(z)|^2$ and therefore $|F(z)|^2\geqslant |F(z',0)|^2 - C_1 |z_n|^2$.\\
Furthermore, as $e^{\vp}$ is Hölder, there exists $\alpha \in ]0,1]$ and $C_2>0$ such that
\[e^{2\vp(z)}\leqslant\left( e^{\vp(z',0)}+C_2 |z_n|^{\alpha}\right)^2\leqslant C_3 (e^{2\vp(z',0)}+ |z_n|^{2\alpha}) \] 
with $C_3=4\max(1,C_2)$. Setting $f(z')=F(z',0)$, we obtain the following inequalities: 
\begin{eqnarray*}
 \frac{|F(z)|^2e^{-2(1+\ep)\vp(z)}}{|z_n|^2 \log^2 |z_n|} &\geqslant & C_3^{-1} \frac{|F(z)|^2}{\log^2 |z_n|}\cdotp \frac 1 {|z_n|^2(e^{2\vp(z',0)}+ |z_n|^{2\alpha})^{1+\ep}}\\
 & \geqslant & \frac{C_3^{-1}|f(z')|^2}{|z_n|^2 \log^2 |z_n|(e^{2\vp(z',0)}+ |z_n|^{2\alpha})^{1+\ep}}\\
 &&-\frac{C_3^{-1}C_1}{\log^2 |z_n|(e^{2\vp(z',0)}+ |z_n|^{2\alpha})^{1+\ep}}
\end{eqnarray*}
Now we suppose that  $U=U'\times D(0,r_n)$ (if it's not the case, we just have to restrict $U$ a bit), and we partially integrate with respect to the last variable, in the family of disks $|z_n|<\rho(z')$ with $\rho(z')=\delta e^{(1+\ep)\alpha^{-1}\vp(z',0)}$ where $\delta>0$ is small enough so that $\rho(z')<r_n$ for all $z'\in U'$. \\
The right term in the right hand side is easily estimated when integrated, because $\log^2|z_n|\geqslant \log^2 r >0$, $z_n$ being of module $\leqslant r <1$, we have:
\[\int_{|z_n|<\rho(z')}\frac{C_1}{\log^2 |z_n|(e^{2\vp(z',0)}+ |z_n|^{2\alpha})^{1+\ep}}dV(z_n) \leqslant C_4 \delta^2 e^{(\frac 2 {\alpha} -2)(1+\ep)\vp(z',0)} \]
which is bounded because $\alpha \leqslant1$.\\
As for the remaining term, we write: 
\begin{eqnarray*}
\int_{|z_n|<\rho(z')} \frac {dV(z_n)} {|z_n|^2 \log^2 |z_n|(e^{2\vp(z',0)}+ |z_n|^{2\alpha})^{1+\ep}}  & \geqslant & C_5\int_{|z_n|<\rho(z')} \frac{e^{-2(1+\ep)\vp(z',0)}dV(z_n)}{|z_n|^2 \log^2 |z_n|}\\
& \geqslant & C_6 e^{-2(1+\ep)\vp(z',0)} \int_0^{\rho(z')} \frac{dt}{t \log^2 t}\\
& = & -C_6 \frac{e^{-2(1+\ep)\vp(z',0)}}{\log \rho(z')}
\end{eqnarray*}
Then, as $\log \rho(z') = \log \delta + (1+\ep)\alpha^{-1} \vp(z',0)$, the lemma \ref{lem:se} gives the expected result (instead of integrating, we could have written directly $(|z_n|^2 \log^2 |z_n|)^{-1} \ge (\rho(z')^2 \log^2 \rho(z'))^{-1}).$  \\
To show the surjectivity of the last map, we use the local version of Ohsawa-Takegoshi-Manivel, with the weight $(1+\ep)\vp$.\\
Finally, to show that the sequence is exact, if $f\in \adj$ vanishes on $H\cap U$, then we write locally $f=g\cdot z_n$ where $g$ is holomorphic, and satisfies on an open set $W \subset U$: 
\[ \int_{W} \frac{|g|^2e^{-2(1+\ep)\vp}}{\log^2 |z_n|} dV<+\infty\]
and using again lemma \ref{lem:se}, we can conclude that $g\in \Imp(W)$, which had to be proved.
\end{proof}

\begin{rema}
In the case where $e^\vp$ is not Hölder continuous, the restriction map may not be well-defined anymore: on the polydisk of radius $\frac 1 2$ in $\C^2$, we choose $f=1$ and \[\vp(z_1,z_2)= \max(-\lambda \log(-\log |z_1|),\log |z_2|)\] with $0<\lambda<\frac 1 2$. We then have $\vp(z)\geqslant -\lambda \log (-\log |z_1|)$ thus $\frac{e^{-2\vp(z)}}{|z_1|^2\log^2|z_1|} \leqslant \frac{1}{|z_1|^2\left|\log |z_1|\right|^{2(1-\lambda)}}$ which is integrable on the polydisk, and it is then easy to see that any constant function belongs to $\adj$. However on the hyperplane $\{z_1=0\}$, $e^{-2\vp(z)}=|z_2|^{-2}$ is not integrable, so that any non-zero constant function does not belong to $\mathscr I_+(\vp_{|H})$, or even to $\mathscr I(\vp_{|H})$.
\end{rema}

\begin{rema}
If $\vp$ has analytic singularities in the sense that near the poles, $\vp=\log(|f_1|+\cdots+|f_r|)+v$ where the $f_i$'s are holomorphic and $v$ is smooth, we know that $\Imp=\Im$, and we have proved previously that $\adj$ coincide with the algebraic ideal. Moreover, $e^\vp$ is clearly Hölder continuous near the poles, so Theorem \ref{thm:se} is a generalization of the algebraic adjunction exact sequence given in \cite{Laz}.
\end{rema}

\begin{coro}
\phantomsection
\label{cor:coh}
Let $X$ be a complex manifold, $H\subset X$ a smooth hypersurface, and $\vp$ a psh function on $X$, $\vp_{|H} \neq -\infty$, such that $e^\vp$ is locally Hölder continuous. Then $\adj$ is a coherent ideal sheaf on $X$.
\end{coro}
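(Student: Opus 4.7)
The plan is to deduce coherence from the adjunction exact sequence established in Theorem \ref{thm:se}, which is available precisely under the Hölder hypothesis on $e^{\vp}$. Since coherence is a local property, I would argue locally near an arbitrary point $x \in X$. Away from $H$ there is nothing to do: the $H$-Poincaré form is locally equivalent to the Lebesgue form, so $\adj$ agrees with $\Imp$ in a neighborhood of $x$, and the sheaf $\Imp$ is handled by the first step below. Near a point of $H$, I would invoke the short exact sequence
\[ 0 \longrightarrow \Imp \otimes \mathcal O_X(-H) \longrightarrow \adj \longrightarrow i_*\mathscr I_{\!+} (\vp _{|H})\longrightarrow 0\]
and conclude coherence of the middle term from coherence of the two outer terms, using that the category of coherent sheaves is stable under extensions.

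The first step is to verify that $\Imp$ is itself coherent. By the strong noetherian property for coherent subsheaves of $\Ox$ recalled after Definition \ref{def:imp}, for every relatively compact open set $\Omega \Subset X$ there exists $\ep>0$ with $\Imp_{|\Omega} = \mathscr I((1+\ep)\vp)_{|\Omega}$, and the right-hand side is coherent by Nadel's theorem. Hence $\Imp$ is coherent on $X$, and twisting by the invertible sheaf $\Ox(-H)$ yields coherence of $\Imp \otimes \Ox(-H)$.

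For the right-hand term, note that the hypothesis $\vp_{|H} \neq -\infty$ guarantees that $\vp_{|H}$ is a genuine psh function on the smooth hypersurface $H$. Applying the same argument as above to $\vp_{|H}$ on $H$ shows that $\mathscr I_{\!+}(\vp_{|H})$ is a coherent ideal sheaf on $H$. Since $i: H \hookrightarrow X$ is a closed immersion, the direct image $i_* \mathscr I_{\!+}(\vp_{|H})$ is a coherent $\Ox$-module. Combining these two facts with the exactness of the sequence yields coherence of $\adj$.

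I do not expect any serious obstacle here: the proof is essentially a formal application of the adjunction exact sequence combined with Nadel's theorem and the strong noetherian property. The only delicate point is that the validity of Theorem \ref{thm:se} itself requires the Hölder hypothesis on $e^{\vp}$, which is why this hypothesis is reproduced in the statement of the corollary; in the absence of such a control, the restriction map need not even be well-defined, as shown by the counterexample following Theorem \ref{thm:se}.
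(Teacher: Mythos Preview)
Your proposal is correct and is exactly the argument the paper has in mind: the corollary is stated without proof immediately after Theorem \ref{thm:se}, and earlier in the section the author explicitly says that coherence of $\adj$ ``is a consequence of the adjunction exact sequence'' under the H\"older hypothesis. Your verification that both outer terms are coherent via Nadel's theorem together with the strong noetherian property, and the conclusion by extension-closure of coherent sheaves, fills in precisely the details the paper leaves implicit.
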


We now turn to get a global extension theorem for holomorphic forms on a hypersurface with coefficients in some multiplier ideal. For this, we only need some vanishing result for cohomology groups, and therefore it is practical to introduce some notations for the global (compact) setting.

\begin{defi}
Let $X$ be a complex manifold, and $H$ a hypersurface of $X$. We pick an almost-psh function $\vp$ (ie it is locally the sum of a psh function and of a smooth function) non identically equal to $-\infty$ on $H$, $T$ a positive closed current of bidegree $(1,1)$ on $X$ well-defined on $H$, and $h$ a singular hermitian metric of some holomorphic line bundle, satisfying $h_{|H}\not \equiv +\infty$. The we define:
\begin{enumerate}
\item[$\bullet$] If locally, $\vp=\psi+f$ with $\psi$ psh and $f$ smooth, then we set $\adj:=\mathcal A \mathrm{dj}_H(\psi)$, which makes sense globally;
\item[$\bullet$] If locally $T=S+dd^c \vp$ where $S$ is smooth, and thus $\vp$ is almost-psh, we set $\mathcal A \mathrm{dj}_H(T):=\adj$, which makes sense globally;
\item[$\bullet$] If the metric $h$ has an almost positive curvature current $\Theta_h$, we set $\mathcal A \mathrm{dj}_H(h):=\mathcal A \mathrm{dj}_H(\Theta_h)$.
\end{enumerate}
\end{defi}

Of course, we can make the same definition with the multiplier ideals instead of the adjoint ideals.\\\\
Combining the adjunction exact sequence and a variant of Nadel vanishing theorem, we can give a global result for extending holomorphic functions with some finite $L^2$ norms. So we have to prove the following result: 

\begin{prop}
Let $(X,\omega)$ be a compact Kähler manifold, and let $(E,h)$ be a line bundle on $X$, where $h$ is a singular hermitian metric, whose curvature tensor $T$ satisfies $T\ge \eta \,  \omega$ for some $\eta >0$. Then 
\[\forall q>0, \quad H^q(X, \mathcal O_X(K_X+E)\otimes \mathscr I_+(h))=0.\]
\end{prop}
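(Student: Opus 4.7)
I would reduce to the classical Nadel vanishing theorem by realising $\Imph$ as the multiplier ideal of a genuine singular hermitian metric on the very bundle $E$, still having strictly positive curvature current. The conceptual obstruction — and the reason one cannot directly invoke Nadel for $(1+\ep)h$ — is that $(1+\ep)h$ is not a metric on $E$ itself: it would correspond, formally, to a metric on $E^{\otimes(1+\ep)}$, which has no meaning for non-integer exponents. The trick is to transport the scaling onto the almost-psh ``relative'' weight of $h$ against a smooth reference metric, leaving the transition factors of $E$ intact.

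\textbf{Construction.} Since $X$ is compact and $\Imph = \bigcup_{\ep>0}\mathscr I((1+\ep)h)$ is an increasing union of coherent subsheaves of $\Ox$, the strong noetherian property provides some $\ep_0>0$ with $\Imph = \mathscr I((1+\ep_0)h)$ globally on $X$; we retain the freedom to shrink $\ep_0$ later. Fix any smooth hermitian metric $h_0$ on $E$ with smooth curvature form $\Theta_0$, and write $h = h_0\, e^{-2\psi}$ for the globally defined almost-psh function $\psi$; in particular $T = \Theta_0 + i\partial\bar\partial\psi$ as currents. Define
\[h' := h_0 \, e^{-2(1+\ep_0)\psi},\]
an honest singular hermitian metric on $E$. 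Locally the weight of $h'$ differs from $(1+\ep_0)$ times the weight of $h$ by the smooth term $\ep_0 \log|e_\alpha|_{h_0}$, so
\[\mathscr I(h') = \mathscr I((1+\ep_0)h) = \Imph.\]

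\textbf{Positivity and conclusion.} The curvature current of the new metric is
\[\Theta_{h'} = \Theta_0 + (1+\ep_0)\, i\partial\bar\partial\psi = (1+\ep_0)\, T - \ep_0\, \Theta_0.\]
Since $\Theta_0$ is a smooth $(1,1)$-form on the compact manifold $X$, there is a constant $C>0$ with $\Theta_0 \le C\om$; shrinking $\ep_0$ further so that $\ep_0 C < \eta/2$ (which preserves the stabilisation from the noetherian step) yields $\Theta_{h'} \ge \tfrac{\eta}{2}\,\om > 0$. Applying the classical Nadel vanishing theorem to $(E,h')$ then gives $H^q(X,\Ox(K_X+E)\otimes \mathscr I(h')) = 0$ for all $q>0$, which is the desired vanishing since $\mathscr I(h') = \Imph$. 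The one technical point to watch — quite modest — is precisely the perturbation estimate absorbing $-\ep_0\Theta_0$ into a small multiple of $\om$; this is the step that makes essential use of the compactness of $X$ and of the strict inequality $T \ge \eta\,\om$ rather than mere semipositivity.
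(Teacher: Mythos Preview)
Your proof is correct and follows essentially the same approach as the paper: both stabilise $\Imph$ by the noetherian property, replace $h$ by the genuine metric $h^{1+\ep}\otimes h_0^{-\ep}$ on $E$ (which is exactly your $h'=h_0 e^{-2(1+\ep_0)\psi}$), check that its curvature $(1+\ep)T-\ep\Theta_0$ remains bounded below by $\tfrac{\eta}{2}\omega$ for $\ep$ small, and then apply the classical Nadel vanishing. Your write-up is simply more detailed about why the perturbation is needed and why compactness is used.
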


\begin{proof}
We know that there exists $\ep_0>0$ such that $\mathscr I_+(T)=\mathscr I((1+\ep_0)T)$. If $h_0$ is any smooth metric on $E$, of curvature current $T_0$, we can choose $\ep_0\ge\ep>0$ sufficiently small such that $(1+\ep)T-\ep T_0 \ge \eta/2$. Then we just have to apply Nadel's vanishing to the hermitian line bundle $(E,h^{1+\ep}\otimes h_0^{-\ep})$ whose multiplier ideal is precisely $\mathscr I_+(h)$.
\end{proof}

\begin{rema}
In view of the openness conjecture, it seems very natural that this result should hold in the general weakly pseudoconvex (Kähler) case, though we were not able to prove it.
\end{rema}

\begin{coro}
Let $(X,\omega)$ be a compact Kähler manifold, $H\subset X$ a smooth hypersurface, $(E,h)$ a holomorphic line bundle equipped with a singular hermitian metric $h$, $h_{|H}\not \equiv+\infty$, whose curvature current has local potentials $\vp$ such that $e^{\vp}$ is Hölder continuous, and such that there exists some $\eta>0$ satisfying $i\d \db \vp \geqslant \eta \, \omega$.\\
Then every holomorphic section $s\in H^0(H,\mathcal O_H(K_H+E_{H})\otimes \mathscr I \!_+(h_{|H}))$ extends to a section $\tilde s \in H^0(X,\Ox(K_X+H+E)\otimes \mathcal A \mathrm{dj}_H(h))$.
\end{coro}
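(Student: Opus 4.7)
The strategy is a direct application of the adjunction exact sequence together with the Nadel-type vanishing stated just above. Since $h_{|H} \not\equiv +\infty$, the local potentials $\vp$ of $h$ satisfy $\vp_{|H} \neq -\infty$, and by hypothesis $e^{\vp}$ is Hölder continuous, so Theorem \ref{thm:se} applies globally. Tensoring the adjunction short exact sequence by the line bundle $\mathcal O_X(K_X+H+E)$, and using the adjunction formula $(K_X+H)_{|H} \simeq K_H$ together with the projection formula for the rightmost term, we obtain the short exact sequence on $X$
\[
0 \longrightarrow \mathscr I_{\!+}(h) \otimes \mathcal O_X(K_X+E) \longrightarrow \mathcal A\mathrm{dj}_H(h) \otimes \mathcal O_X(K_X+H+E) \longrightarrow i_*\!\bigl(\mathscr I_{\!+}(h_{|H}) \otimes \mathcal O_H(K_H+E_{|H})\bigr) \longrightarrow 0.
\]

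Passing to the associated long exact sequence in cohomology gives
\[
H^0\!\bigl(X, \mathcal O_X(K_X+H+E) \otimes \mathcal A\mathrm{dj}_H(h)\bigr) \longrightarrow H^0\!\bigl(H, \mathcal O_H(K_H+E_{|H}) \otimes \mathscr I_{\!+}(h_{|H})\bigr) \longrightarrow H^1\!\bigl(X, \mathcal O_X(K_X+E) \otimes \mathscr I_{\!+}(h)\bigr),
\]
where the first arrow is the restriction map. The section $s$ lies in the middle group, so producing an extension $\tilde s$ reduces to showing that the connecting homomorphism to $H^1$ vanishes.

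For this I invoke the Nadel-type vanishing proved just before: since the curvature current $i\d\db\vp$ of $(E,h)$ dominates $\eta\,\omega$, we have $H^q(X, \mathcal O_X(K_X+E)\otimes \mathscr I_{\!+}(h))=0$ for every $q>0$, and in particular the relevant $H^1$ group vanishes. Consequently the restriction map is surjective and the desired extension $\tilde s$ exists.

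The only non-formal point to verify is that the adjunction sequence is actually globally available in our setting; this is precisely what Theorem \ref{thm:se} provides under the Hölder hypothesis, while Corollary \ref{cor:coh} guarantees that $\mathcal A\mathrm{dj}_H(h)$ is coherent and hence has well-defined cohomology. Once these structural inputs are in place, the argument is a faithful transcription of the standard algebraic adjunction extension argument of \cite{Laz}, with $\mathscr I_{\!+}$ in place of $\mathscr I$ so that the Nadel-type vanishing can be invoked in its right-regularized form.
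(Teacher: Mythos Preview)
Your proof is correct and follows essentially the same approach as the paper: tensor the adjunction exact sequence from Theorem \ref{thm:se} by $\mathcal O_X(K_X+H+E)$, then apply the Nadel-type vanishing for $\mathscr I_{\!+}(h)$ to kill the relevant $H^1$ and obtain surjectivity of the restriction map. You are slightly more explicit about the adjunction formula, the projection formula, and the coherence input from Corollary \ref{cor:coh}, but the argument is the same.
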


\begin{rema}
It is well-known that if a compact Kähler manifold carries a integral Kähler current, then it is Moishezon and therefore automatically projective.
\end{rema}

\begin{proof}[Proof]
Tensorizing the adjunction exact sequence by $K_{X}+E+H$, we obtain :
\[ 0 \longrightarrow \I_+(h) \, \otimes \, \Ox(K_{X}+E) \longrightarrow \mathcal A \mathrm{dj}_H(h) \, \otimes \, \Ox(K_{X}+E+H) \longrightarrow i_*\mathscr I_{\!+} (h_{|H}) \, \otimes \, \mathcal O_H(K_H+E_H)\longrightarrow 0\]
If $T$ is the Chern curvature of $(E,h)$, then the last proposition show that:  
\[H^1(X,\Ox(K_X+E) \otimes \I_+(h))=H^1(X,\Ox(K_X+E) \otimes \I_+(T))=0 \]
Therefore the restriction maps induces a surjection
\[ H^0(X, \Ox(K_{X}+E+H)\otimes \mathcal A \mathrm{dj}_H(h))  \twoheadlongrightarrow H^0(H,  \mathcal O_H (K_H+E_H) \otimes \mathscr I_{\!+} (h _{|H}))\] 
which had to be proved.
\end{proof}

The approach we used to show this result, which relies in an essential manner on the local version of Manivel's theorem, is a natural way to obtain the global version of Manivel's theorem \cite{Dem3}. Nevertheless, the result we obtain is a quite weaker version of the original Manivel's theorem, in the sense that it is qualitative (we don't have any control on the $L^2$ norm anymore), and is only given for "regular" currents (more precisely with Hölder psh local potentials).

\subsection{Back to the sheaf $\adjz$}

Finally, we would like to give one positive result concerning the ideal $\adjz$. For this, the crucial fact is given in the following lemma, which assumptions are unfortunately really restrictive:

\begin{lemm}
\phantomsection
\label{lem:cru}
Let $\vp$ be a psh function which has only analytic or toric singularities on a bounded open set $B\subset \C^n$, satisfying $\vp < C < 0$ on $B$, and let $f$ be an holomorphic function on $B$. If the integral
\[\int_B |f|^2 \frac{e^{-2\vp}}{\vp} dV\] 
converges, then so does the integral
\[\int_B|f|^2 e^{-2\vp}dV.\] 
\end{lemm}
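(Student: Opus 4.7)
The plan is to treat the toric and analytic cases separately, but in both to reduce the $1/|\vp|$-weighted condition to an explicit integrability question where the extra weight can be controlled directly. Since $\vp < C < 0$, we have $1/|\vp| \leq 1/|C|$, so the weight is bounded and the hypothesis is a priori weaker than the conclusion; the delicate region is only near the poles of $\vp$, where $|\vp|$ is large. In both cases the crux will be an adaptation of Proposition \ref{prop:conc} showing that the factor $1/g$ (with $g = -\vp$ in logarithmic coordinates) cannot turn a divergent integral of $e^g$ into a convergent one.

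In the toric case, I would first reduce to $f = \mathbf z^\alpha$ by Parseval, and then pass via $t_i = -\log |z_i|$ to the finiteness of
\[\int_{[\eta,\infty)^n} \frac{e^{2(g(t) - \langle A, t\rangle)}}{g(t)}\, dt, \qquad A = \alpha + \mathds 1,\]
where $g$ is the concave function attached to $\vp$ (so $g > |C| > 0$). By Theorem \ref{thm:how2}, the conclusion is equivalent to $A \in \widering{P(g)}$. I would argue by contradiction following Proposition \ref{prop:conc}: if $A \notin \widering{P(g)}$, Hahn--Banach provides a non-zero $w$ with non-negative coordinates such that $\langle \lambda - A, w\rangle \geq 0$ for all $\lambda \in \widering{P(g)}$; by Proposition \ref{prop:comp} and the identification $\widering{P(g)} = Gr(g)$, this yields $\langle \nabla g(x), w\rangle \geq \langle A, w\rangle$ at every differentiable point of $g$. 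Fubini then lets me choose a ray $R = a + \R_+ w$ on which $g$ is a.e.\ differentiable and the partial integral $\int_R$ is finite; integrating the slope inequality along $R$ gives $g - \langle A, \cdot\rangle \geq g(a) - \langle A, a\rangle$ on $R$, so $e^{2(g - \langle A, \cdot\rangle)}$ is bounded below. On the other hand, one-variable concavity forces $g(a + tw) \leq C_1 (1+t)$, hence $1/g \geq C_2/(1+t)$, and the integral on $R$ is bounded below by $C_3 \int_0^\infty dt/(1+t) = +\infty$, contradicting the choice of $R$.

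In the analytic case, I would take a log-resolution $\mu\colon \widetilde X \to X$ of the ideal of singularities of $\vp$; in local coordinates on $\widetilde X$ we then have $\vp \circ \mu = \sum_{i=1}^p a_i \log |z_i| + O(1)$ with $a_i \geq 0$, and $|J_\mu|^2 = \prod |z_i|^{2 c_i}$. Pulling back the integrals and applying Parseval chart-by-chart to $f \circ \mu$ reduces the problem to the elementary statement
\[\int_{[t_0,\infty)^n} \frac{e^{-2\sum \gamma_i u_i}}{\sum a_i u_i}\, du < \infty \quad \Longleftrightarrow \quad \gamma_i > 0 \text{ for all } i,\]
and the right-hand side is exactly the criterion for the corresponding unweighted integral to be finite (when some $\gamma_i \leq 0$, the divergence survives the $1/\sum a_i u_i$ factor, which contributes at most a logarithmic gain). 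The main obstacle is thus the toric ray-divergence step, and what makes it work is that the concavity of $g$ forces $g$ to grow at most linearly along any ray, so the extra $1/g$ weight contributes only a factor $\sim 1/(1+t)$, still non-integrable at infinity; this is precisely why the $1/|\vp|$ weight is too small to rescue a divergent integral.
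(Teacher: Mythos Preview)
Your proposal is correct and follows essentially the same route as the paper: both split into the analytic and toric cases, handle the analytic case by pulling back through a log-resolution and reducing to a Bertrand-type criterion (your displayed equivalence is exactly the paper's ``toric change of variable $+$ Bertrand criterion'' spelled out), and handle the toric case by rerunning the Hahn--Banach/Fubini ray argument of Proposition~\ref{prop:conc} with the extra observation that the concave denominator grows at most linearly along the ray, so the $1/g$ factor only contributes a non-integrable $1/(1+t)$. The paper is terser---it just records the one-variable statement and says ``follow the proof of Proposition~\ref{prop:conc}''---while you unfold the slope inequality and the $g\le C_1(1+t)$ bound explicitly, but the content is the same.
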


\begin{proof}[Proof]
We start with the case where $\vp$ has analytic singularities. As $B$ is bounded, the question is actually local, and using a log resolution, and using the notations of section \ref{sec:comp}, the integrability assumption becomes:
\[\int_{U'} \frac{\prod|z_j|^{2(c_j+d_j-a_j)}}{|\log (\prod |z_j|^{a_j})|} dV'<+\infty, \]
and performing a toric change of variable, the Bertrand criterion shows that
\[\int_{U'} \prod|z_j|^{2(c_j+d_j-a_j)}dV'<+\infty, \]
which we had to show.\\

Let us now get to the toric case. Again, the question is local, so we borrow the techniques of the first part. The calculus appearing in the proof of Theorem \ref{thm:how2} shows that we are boiled down to show the integrability of $e^h$ for some $h$ concave (more precisely, $f$ can be chosen to be a monomial $z^{\alpha}$, and $h=g-\la \alpha, \cdotp \ra$ for $g$ the concave function attached to $\vp$). As for non-zero concave function $h$ of $1$-variable (say on $\R^+$) the integrability of $e^h/g$ for $g$ a non-zero concave function implies the one of $e^h$ (indeed, $h(x)=O_{x\to+\infty}(x)$ or more precisely, either $h(x)$ tends to $0$ when $x$ goes to $+\infty$, or it is equivalent to $\ell x$ where $\ell=\lim \frac{d^+}{dx}h(x,\cdot)$ is non-zero; idem for $g$), we can follow the proof of Proposition \ref{prop:conc} to show that this extends to all dimensions.

\end{proof}

Denoting $\widetilde{\mathscr I(\vp)}$ the analogue of the multiplier ideal sheaf where we replace the integrability condition by the local integrability of $\frac{|f|^2e^{-2\vp}}{\log^2 |s|}$, where $s$ is a (local) section defining $H$, with $ds_{|H}$ never zero. Then we have the following result: 

\begin{theo}
\phantomsection
\label{prop:se}
Let $\vp$ be a Hölder psh function whose singularities are only analytic or toric, and let $i:H\hookrightarrow X$ the inclusion. The the natural restriction map induces the following exact sequence: 
\[ 0 \longrightarrow \widetilde{\mathscr I(\vp)} \otimes \mathcal O_X(-H) \longrightarrow \adjz \longrightarrow i_*\mathscr I (\vp _{|H})\longrightarrow 0\]
\end{theo}

\begin{proof}[Proof]
The proof is very similar to the one of the adjunction exact sequence. The only difference appearing here concerns the restriction map, which has \textit{a priori} no reason to be well-defined.\\
So we take $F\in \adjz$, and as previously, we have: 
\begin{eqnarray*}
 \frac{|F(z)|^2e^{-2\vp(z)}}{|z_n|^2 \log^2 |z_n|} &\geqslant & C_3^{-1} \frac{|F(z)|^2}{\log^2 |z_n|}\cdotp \frac 1 {|z_n|^2(e^{2\vp(z',0)}+ |z_n|^{2\alpha})}\\
 & \geqslant & \frac{C_3^{-1}|f(z')|^2}{|z_n|^2 \log^2 |z_n|(e^{2\vp(z',0)}+ |z_n|^{2\alpha})}-\frac{C_3^{-1}C_1}{\log^2 |z_n|(e^{2\vp(z',0)}+ |z_n|^{2\alpha})}
\end{eqnarray*}
We also suppose that $U=U'\times D(0,r_n)$, and we partially integrate with respect to the last variable, in the family of disks $|z_n|<\rho(z')$ with $\rho(z')=\epsilon e^{\alpha^{-1}\vp(z',0)}$ with $\epsilon>0$ small enough so that $\rho(z')<r_n$ for all $z'\in U'$. \\
The right term in the right hand side is easily estimated when integrated, because $\log^2|z_n|\geqslant \log^2 r >0$, $z_n$ being of module $\leqslant r <1$, we have:
\[\int_{|z_n|<\rho(z')}\frac{C_1}{\log^2 |z_n|(e^{2\vp(z',0)}+ |z_n|^{2\alpha})}dV(z_n) \leqslant C_4 \epsilon^2 e^{(\frac 2 {\alpha} -2)\vp(z',0)} \]
which is bounded because $\alpha \leqslant 1$ and $\vp$ is upper bounded.
For the remaining term: 
\begin{eqnarray*}
\int_{|z_n|<\rho(z')} \frac {dV(z_n)} {|z_n|^2 \log^2 |z_n|(e^{2\vp(z',0)}+ |z_n|^{2\alpha})}  & \geqslant & C_5\int_{|z_n|<\rho(z')} \frac{e^{-2\vp(z',0)}dV(z_n)}{|z_n|^2 \log^2 |z_n|}\\
& \geqslant & C_6 e^{-2\vp(z',0)} \int_0^{\rho(z')} \frac{dt}{t \log^2 t}\\
& = & -C_6 \frac{e^{-2\vp(z',0)}}{\log \rho(z')}
\end{eqnarray*}
Then we write $\log \rho(z') = \log \epsilon + \alpha^{-1} \vp(z',0)$, and using the lemma \ref{lem:cru}, the proof is finished. \\
\end{proof}
\begin{rema}
So if we knew that the lemma \ref{lem:cru} still holds under the general assumption that $e^{\vp}$ is Hölder continuous, we would have a general twisted adjunction exact sequence for $\adjz$.
\end{rema}

\bibliographystyle{amsalpha}
\bibliography{Biblio.bib}

\end{document}